\newtheorem{theorem}{Theorem}[section]
\newtheorem{lemma}[theorem]{Lemma}
\newtheorem{example}{Example}[section]
\theoremstyle{definition}
\newtheorem{definition}{Definition}
\theoremstyle{remark}
\newtheorem{remark}{Remark}[section]
\title[A maximum principle, f.B.M., infinite horizon]{A maximum principle for fractional diffusion processes with infinite horizon}
\author{Sven Haadem}
\address{Center of Mathematics for Applications (CMA), University of Oslo,Box 1053 Blindern, N-0316 Oslo, Norway}
\email{sven.haadem@cma.uio.no}
\date{15 June 2012}
\keywords{Optimal control; Fractional Brownoian motion; Maximum principle; Hamiltonian; Infinite horizon; Adjoint process; Partial information}
\subjclass[2010]{Primary classes 93EXX; 93E20; 60G22; Secondary classes 60H10; 49J55}
\begin{document}

\begin{abstract}
We prove a maximum principle for the problem of optimal control for a fractional diffusion with infinite horizon. Further, we show existence of fractional backward stochastic differential equations on infinite horizon. Finally, we illustrate our findings with an example.
\end{abstract}

\maketitle

\section{Introduction}
In this paper we consider a control problem with respect to a given performance functional:
\begin{align*}
J(u) = E\left[ \int_0^{\infty} f(t, X(t),u(t), \omega)dt\right],
\end{align*}
where $X(t)$ is a controlled fractional diffusion and $u(t)$ is the control process. We allow for the case when the controller only has access to the partial information $\mathcal{E}_t$ at time t. Thus, we have a infinite horizon problem with partial information. Infinite-horizon optimal control problems arise in many fields of economics, in particular in models of economic growth. Note that because of the general nature of the partial information filtration $\mathcal{E}_t$, we cannot use stochastic dynamic programming and the Hamilton-Jacobi-Bellman (HJB) equation to solve the above optimization problem. Thus our problem is different from partial observation control problems. 
\newline\newline
In the deterministic case the maximum principle by Pontryagin (1962) has been extended to infinite-horizon problems, but transversality conditions have not been taken into account in the litrature in general. 
The 'natural' transversality condition in the infinite case would be a zero limit condition, meaning in the economic sense that one more unit of good at the limit gives no additional value. But this property is not necessarily verified. In fact \cite{halkin} provides a counterexample for a 'natural' extension of the finite-horizon transversality conditions. Thus some care is needed in the infinite horizon case. This paper contains a extension of the maximum principle in \cite{BHOS} and the existence results of fractional backward stochastic differential equations found in \cite{HP} to infinite horizon.
\newline\newline
In the case of Brownian motion there have been several paper on infinite-horizon, see e.g. \cite{HOP}, and \cite{HMOP} and \cite{AHOP} for the delay case. We are going to extend the results in \cite{HOP} to the case of a controlled diffusion process driven by a noise with memory. Here the driving noise in our controlled system with memory is modelled by a fractional Brownian motion, $B_t^H$, $t \geq 0$, where the Hurst parameter $h \in (\frac{1}{2},1)$. By solving this problem we establish a sufficient stochastic maximum principle. The latter result requires the construction of unique solutions of fractional noide driven backward stochastic differential equations (fBSDE's) of the form
\begin{align*}
\begin{cases}
 dp(t) = -f(t,\eta_t,y_t,z_t)dt - z_tdB_t^H, \\
 \underset{t \to \infty}{\lim} p(t) = 0.
\end{cases}
\end{align*}
As for the theory of BSDE's, which has gained more and more importance in stochastic control and mathematical finance, we refer the reader to the seminal paper by Pardoux and Peng \cite{PP}.
For some technical background to fractional Brownian motion, see \cite{Bender} and \cite{Bernt3}. The latter one will be a basis for much of our framework. For a comprehensive background to the mathematical structures used, see \cite{BK}.

\section{Preliminaries}
Let $B_t^H$, $t \geq 0$, be a fractional Brownian motion with Hurst index $H \in (\frac{1}{2},1)$ on the probability space $(\Omega,\mathcal{F}^H,\mathbf{P}^H)$. We assume $(\Omega,\mathcal{F}^H,\mathbf{P}^H)$ is endowed with the natural filtration $\mathcal{F}^H_t$ of $B^H$, where $\mathcal{F}^H = \vee_{t\geq0} \mathcal{F}^H_t$.

In the following we aim at introducing stochastic integrals with repsect to $B_t^H$ by using techniques from Gaussian white noise analysis.
For a general introduction to White noise and Hida distributions in the Brownian motion case, the reader may consult the excellent books \cite{Bernt5}, \cite{Kuo}, \cite{Obata} and of course the classical book \cite{Hida}. For a comprehensive explanation of the contruction of the fractional Brownian motion, where many of the following theorems and lemmas are found, we refer to \cite{Bernt3}.

Let $1 > H > \frac{1}{2}$, and  
\[
 \phi(s,t) = \phi_{H}(s,t):= H(2H-1)|s-t|^{2H-2}. 
\]
Denote by $\mathcal{S}(\mathbb{R})$ the Schwartz space of rapidly decreasing smooth functions on $\mathbb{R}$, and for $f,g \in \mathcal{S}(\mathbb{R})$, define
\begin{align*}
  \langle f,g \rangle_{H,t} := \int_0^t\int_0^t f(s)g(r) \phi(s,r)dsdr,
\end{align*}
and
\begin{align*}
  \langle f,g \rangle_H := \int_{\mathbb{R}}\int_{\mathbb{R}} f(s)g(t) \phi(s,t)dsdt.
\end{align*}
Now consider the completion of  $\mathcal{S}(\mathbb{R})$ under this inner product and denote the resulting separable Hilbert space by
\begin{align}\label{eq:L2}
 L^2_{\phi}.
\end{align}
Let $ \Omega = \mathcal{S}'(\mathbb{R})$ be the topological dual of $\mathcal{S}(\mathbb{R})$ (the space of tempered distributions). By Bochner-Minlos theorem or a abstract Hilbert space argument we have that there exists a probability measure $\mathbb{P}^H$ on the Borel $\sigma$-algebra ,$\mathcal{B}(\Omega)$, such that
\begin{align*}
 \int_{\Omega} e^{i\langle \omega,f \rangle}d\mathbb{P}^H(\omega) = e^{-\frac{1}{2}\lVert f\rVert^2_H},
\end{align*}
for all $f \in \mathcal{S}(\mathbb{R})$, where $\langle \cdot,\cdot \rangle$ is the dual pairing between $\mathcal{S}(\mathbb{R})$ and $\mathcal{S}'(\mathbb{R})$. It is now easy to see from a Fourier argument that
\begin{align*}
  E_ {\mathbb{P}^H}[\langle \cdot, f \rangle] := E[\langle \cdot, f \rangle] = 0,
\end{align*}
and
\begin{align*}
 E[\langle \cdot, f \rangle^2] = \lVert f\rVert^2_H.
\end{align*}
Now we define the fractional Brownian motion as
\begin{align*}
 B_t^H = B^H(t,\omega) = \langle \omega , \mathbf{1}_{[0,t]}(\cdot) \rangle,
\end{align*}
by extension, which is an element of $L^2(\mathbb{P}^H)$ for each $t$.
Let $\mathcal{J} = (\mathbb{N}_{0}^{\mathbb{N}})_c$ be the set of all finite sequences $\alpha = (\alpha_1, \ldots,\alpha_m)$ of nonnegative integers. For $\alpha = (\alpha_1, \ldots,\alpha_m)$, let
\begin{align*}
 \mathcal{H}_{\alpha}(\omega) := h_{\alpha_1}(\langle \omega, e_1 \rangle) \cdots h_{\alpha_m}(\langle \omega, e_m \rangle),
\end{align*}
where $h_n$ is the n-th Hermite polynomial (see \cite{Bernt5}) and $\{e_n\}_{n\geq 0}$ a orthonormal basis of $L_{\phi}^2(\mathbb{R})$.
As in the case of Brownian motion we obtain a Wiener-It$\bar{o}$ chaos expansion.
\begin{theorem}
 let $F \in L^2(\mathbb{P}^H)$. Then there exist unique $C_{\alpha}$, $\alpha \in \mathcal{J}$ such that
\begin{align*}
 F(\omega) = \sum_{\alpha \in \mathcal{J}}c_{\alpha} \mathcal{H}_{\alpha}(\omega),
\end{align*}
where the convergence is in $F \in L^2(\mathbb{P}^H)$. We also have that
\begin{align*}
 \lVert F \rVert^2_H = \sum_{\alpha \in \mathcal{J}} \alpha! c^2_{\alpha},
\end{align*}
where $\alpha! := \alpha_1\cdots \alpha_m$.

\end{theorem}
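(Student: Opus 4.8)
The plan is to transcribe the classical Wiener--It\^o chaos argument, the key observation being that the coordinate variables $\xi_i := \langle \cdot, e_i \rangle$ form an i.i.d. sequence of standard Gaussians under $\mathbb{P}^H$. First I would record this fact: polarizing the identity $E[\langle \cdot, f \rangle^2] = \lVert f \rVert_H^2$ gives $E[\langle \cdot, f\rangle\langle \cdot, g\rangle] = \langle f, g\rangle_H$, which extends by continuity from $\mathcal{S}(\mathbb{R})$ to all of $L^2_\phi$ (so that $\xi_i$ is well defined); since $\{e_n\}$ is orthonormal in $L^2_\phi$ this yields $E[\xi_i\xi_j] = \delta_{ij}$, and joint Gaussianity of any finite subfamily follows by applying the characteristic functional $\int_\Omega e^{i\langle\omega,f\rangle}\,d\mathbb{P}^H(\omega) = e^{-\frac12\lVert f\rVert_H^2}$ to linear combinations of the $e_i$. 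Hence the $\xi_i$ are independent $N(0,1)$.

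Second, I would deduce orthogonality and the norm formula for the $\mathcal{H}_\alpha$. Using the classical identity $\int_{\mathbb{R}} h_m(x)h_n(x)\tfrac{1}{\sqrt{2\pi}}e^{-x^2/2}\,dx = \delta_{mn}\,n!$ together with the independence of the $\xi_i$, one gets $E[\mathcal{H}_\alpha \mathcal{H}_\beta] = \delta_{\alpha\beta}\,\alpha!$ for $\alpha,\beta \in \mathcal{J}$, so $\{\mathcal{H}_\alpha\}_{\alpha\in\mathcal{J}}$ is an orthogonal system in $L^2(\mathbb{P}^H)$ with $E[\mathcal{H}_\alpha^2] = \alpha!$.

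The heart of the argument is the third step, completeness: the closed linear span of $\{\mathcal{H}_\alpha\}$ is all of $L^2(\mathbb{P}^H)$. This needs two ingredients. (i) Measurability: since $\mathrm{span}\{e_n\}$ is dense in $L^2_\phi \supseteq \mathcal{S}(\mathbb{R})$, every $\langle\cdot,f\rangle$ is $\sigma(\xi_i : i\in\mathbb{N})$-measurable, and as such variables generate $\mathcal{B}(\Omega)$ we have $\mathcal{B}(\Omega) = \sigma(\xi_i : i\in\mathbb{N})$ up to $\mathbb{P}^H$-null sets; thus it suffices to approximate $\sigma(\xi_1,\dots,\xi_n)$-measurable functions and then let $n\to\infty$ via a martingale convergence argument. (ii) Polynomial density for a finite Gaussian vector: if $G \in L^2$ of the law of $(\xi_1,\dots,\xi_n)$ is orthogonal to every polynomial, then the entire function $z \mapsto E[G(\xi)e^{i z\cdot\xi}]$ and all its derivatives vanish at $0$, forcing $G \equiv 0$; since the products $\prod_i h_{\alpha_i}(\xi_i)$ with $\alpha$ supported in $\{1,\dots,n\}$ span the same space as the monomials in $\xi_1,\dots,\xi_n$, completeness follows. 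I expect step (i)--(ii), i.e.\ pinning down that $\mathbb{P}^H$ is carried by the countable coordinate family and running the polynomial-density estimate uniformly in $n$, to be the main obstacle; the rest is a faithful copy of the Brownian case through the isometry $f \mapsto \langle\cdot,f\rangle$.

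Finally, orthogonality plus completeness puts us in the standard Hilbert-space situation: every $F \in L^2(\mathbb{P}^H)$ admits a unique expansion $F = \sum_{\alpha\in\mathcal{J}} c_\alpha \mathcal{H}_\alpha$ converging in $L^2(\mathbb{P}^H)$, with $c_\alpha = E[F\mathcal{H}_\alpha]/\alpha!$, and Parseval's identity gives $\lVert F\rVert_H^2 = E[F^2] = \sum_{\alpha\in\mathcal{J}} \alpha!\,c_\alpha^2$, which is the asserted norm equality.
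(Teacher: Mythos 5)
Your proposal is correct, and it is essentially the argument the paper is implicitly relying on: the paper gives no proof of this theorem at all, but simply cites \cite{Bernt5}, Theorem 3.1.8, and the proof there is exactly the classical Wiener--It\^o chaos expansion you reconstruct --- Gaussianity and independence of the coordinates $\xi_i=\langle\cdot,e_i\rangle$ from the Bochner--Minlos characteristic functional, Hermite orthogonality giving $E[\mathcal{H}_\alpha\mathcal{H}_\beta]=\delta_{\alpha\beta}\,\alpha!$, completeness via the fact that $\mathcal{B}(\Omega)=\sigma(\xi_i:i\in\mathbb{N})$ up to null sets together with density of polynomials in $L^2$ of a finite-dimensional Gaussian law (analyticity of $z\mapsto E[Ge^{z\cdot\xi}]$ thanks to Gaussian exponential integrability, then injectivity of the Fourier transform), and finally Parseval. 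Two cosmetic points worth flagging so your write-up matches the statement: the norm $\lVert F\rVert_H^2$ in the theorem must be read as the $L^2(\mathbb{P}^H)$-norm $E[F^2]$, as you do; and the paper's ``$\alpha!:=\alpha_1\cdots\alpha_m$'' is a typo for $\alpha_1!\cdots\alpha_m!$, which is the quantity your Hermite computation actually produces.
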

See \cite{Bernt5}, Theorem 3.1.8 for a proof.
We are now ready to define the fractional Hida test function and distribution spaces.
\begin{definition}[The fractional Hida test function space]
Let $(\mathcal{S})_H$ be the set of all $\psi(\omega) = \sum_{\alpha \in \mathcal{J}} a_{\alpha} \mathcal{H}_{\alpha}(\omega) \in L^2(\mathbb{P}^H)$ such that
\begin{align*}
 \lvert \psi \rVert^2_{H,k} = \sum_{\alpha \in \mathcal{J}} \alpha! a_{\alpha}^2 (2\mathbb{N})^{k\alpha},
\end{align*}
where 
\begin{align*}
(2\mathbb{N})^{\alpha} = \underset{j}{\prod} (2j)^{\gamma_j}.
\end{align*}
\end{definition}

\begin{definition}[The fractional Hida distribution space]
Let $(\mathcal{S})^{*}_H$ be the set of all formal expansions
\begin{align*}
G(\omega) = \sum_{\beta \in \mathcal{J}} b_{\beta} \mathcal{H}_{\beta}(\omega) \in L^2(\mathbb{P}^H)
\end{align*}
such that
\begin{align*}
 \lvert G \rVert^2_{H,-q} = \sum_{\beta \in \mathcal{J}} \beta! b_{\alpha}^2 (2\mathbb{N})^{-q\beta},
\end{align*}
for some $q \in \mathbb{N}$.
\end{definition}
Let $(\mathcal{S})_H$ be equiped with the projective topology and $(\mathcal{S})^{*}_H$ with the inductive topology. Then $(\mathcal{S})^{*}_H$ is isomorph (in the category of topological vector spaces) to the topological dual of $(\mathcal{S})_H$ with action given by
\begin{align*}
 \ll G,\psi\gg := \langle G,\psi\rangle_{(\mathcal{S})_H} := \sum_{\alpha \in \mathcal{J}} \alpha!  a_{\alpha} b_{\alpha}.
\end{align*}
\begin{definition}
Let
\begin{align*}
F(\omega) = \sum_{\alpha \in \mathcal{J}} a_{\alpha} \mathcal{H}_{\alpha}(\omega) 
\end{align*}
and
\begin{align*}
G(\omega) = \sum_{\beta \in \mathcal{J}} b_{\beta} \mathcal{H}_{\beta}(\omega) 
\end{align*}
belong to $(\mathcal{S})^{*}_H$. Then we define the Wick product $F\diamond G$ by
\begin{align*}
F\diamond G(\omega) = \sum_{\alpha,\beta \in \mathcal{J}} a_{\alpha}b_{\beta} \mathcal{H}_{\alpha+\beta}(\omega) 
\end{align*}
\end{definition}
Further, let
\[
 D_t^{\phi}F = \int_{\mathbb{R}} D_s^H F \phi(t,s)ds
\]
denote the Malliavin $\phi$-derivative of F (see \cite{Bernt3} Definition 3.5.1).

\begin{definition}[Fractional Wick-It$\bar{o}$-Skorohod integral (fWIS)]
Suppose $Y: \mathbb{R} \to (\mathcal{S})^{*}_H$ is a function such that $Y(t) \diamond W^H(t)$ is Bochner (or weaker Pettis) integrable in $(\mathcal{S})^{*}_H$. Then we define the fWIS-integral, $\int_{\mathbb{R}} Y(t)dB_t^H$, as
\begin{align*}
 \int_{\mathbb{R}} Y(t)dB_t^H := \int_{\mathbb{R}} Y(t)\diamond W_t^Hdt
\end{align*}

\end{definition}

\section{Fractional Wick-It$\bar{o}$-Skorohod integral in $L^2$}

The integral defined above is an stochastic distribution, but we would like to work in $L^2$ so we need to do some work to ensure that. To achieve this, we extend the construction of the fractional Brownian integral in \cite{Bernt3} to infinite horizon.

\subsection{Integral of simple functions}
Consider a partition $\pi_n$ of $[o,n]$, $\pi:0=t_0 < t_1 < \ldots <t_n=n$.
We now define the integral
\begin{align*}
 \int_0^{\infty}F^n_sdB^H_s,
\end{align*}
for a a simple function $F^n \in L^2(\mathbb{P}^H)$ of the form
\begin{align*}
F^n(t,\omega) = \sum_{i=0}^{n} F^n_i(\omega) \mathbf{1}_{[t_i,t_{i+1})}(t), 
\end{align*}
as
\begin{align*}
\int_0^{\infty}F^n_sdB^H_s := S(F,\pi_n) := \sum_{i=o}^{n-1}F_{t_{i}}\diamond(B^H_{t_{i+1}} - B^H_{t_{i}}),
\end{align*}
if $\lVert f \rVert _{\mathcal{L}_{\phi}^{1,2}} < \infty $ where
\begin{align*}
 \lvert f \rvert _{\mathcal{L}_{\phi}^{1,2}} := E\bigg[ \int_{\mathbb{R}} \int_{\mathbb{R}} f(s)f(t) \phi(s,t)dsdt 
+ (\int_{\mathbb{R}} D_t^{\phi}f(t)dt)^2\bigg],
\end{align*}
Using that for $F,G \in L^2(\mathbb{P}^H)$ we have that $E[F\diamond G] = E[F]E[G]$ and
\begin{align*}
 E[S(F,\pi_n)] &=  \sum_{i=0}^{n} E \bigg[F^n_i(\omega) \diamond(B^H_{t_{i+1}} - B^H_{t_{i}})\bigg]\\
&=  \sum_{i=0}^{n} E \bigg[F^n_i(\omega) E(B^H_{t_{i+1}} - B^H_{t_{i}}) \bigg] = 0
\end{align*}
Further we have that
\begin{align*}
&E \bigg[(F^n_i(\omega) \diamond(B^H_{t_{i+1}} - B^H_{t_{i}}))(F^n_j(\omega) \diamond(B^H_{t_{j+1}} - B^H_{t_{j}}))\bigg]\\
&=  E\Bigg[ \int_{t_{i}}^{t_{i+1}} \int_{t_{j}}^{{t_{j+1}}} D^{\phi}_sF^n_{t_{i}}D^{\phi}_tF^n_{t_{i}}dtds + F^n_{t_{i}}F^n_{t_{j}}\int_{t_{i}}^{t_{i+1}} \int_{t_{j}}^{{t_{j+1}}}\phi(s,t)dsdt\Bigg],
\end{align*}
so that
\begin{align*}
 E[S(F^n,\pi_n)^2] &= \sum_{i,j=0} E\Bigg[ \int_{t_{i}}^{t_{i+1}} \int_{t_{j}}^{{t_{j+1}}} D^{\phi}_sF^n_{t_{i}}D^{\phi}_tF^n_{t_{i}}dtds\\
&+ F^n_{t_{i}}F^n_{t_{j}}\int_{t_{i}}^{t_{i+1}} \int_{t_{j}}^{{t_{j+1}}}\phi(s,t)dsdt\Bigg].
\end{align*}

\subsection{FWIS integral for general stochastic functions }
Define
\begin{align*}
|\pi_n | := \max_i (t_{i+1} - t_i)
\end{align*}
 and $f_t^{\pi} := f_{t_i}$ if $t_i \leq t \leq t_{i+1}$. Assume that  $E[\lVert f^{\pi_n} - f \rVert^2_H] \to 0$ and
\begin{align*}
 E[S(F,\pi_n)^2] &= \sum_{i,j=0} E\bigg[ \int_{t_{i}}^{t_{i+1}} \int_{t_{j}}^{{t_{j+1}}} D^{\phi}_sF_{t_{i}}D^{\phi}_tF_{t_{i}}dtds\\
&+ F_{t_{i}}F_{t_{j}}\int_{t_{i}}^{t_{i+1}} \int_{t_{j}}^{{t_{j+1}}}\phi(s,t)dsdt\bigg]
\end{align*}
converges to 0 as $|\pi_n| \to 0$. 
For a sequence of partitions $\{\pi_n\}$ such that $|\pi_n | \to 0$ then $S(F,\pi_n)$ is a Cauchy sequence in $L^2(\mathbb{P}^{H})$. The limit in $L^2(\mathbb{P}^{H})$ is 
\begin{align}\label{eq:def.int}
 \int_0^{\infty}f(s)dB^H(s) := \underset{|\pi | \to 0}{\lim} \sum_{k=0}^{k-1}f_{t_i} \diamond (B^H_{t_{i+1}} - B^H_{t_{i}}),
\end{align}
so that
\begin{align*}
 E\Bigg[\bigg|\int_0^{\infty}f(s)dB^H(s)\bigg|^2\Bigg] = E\bigg[ \int_{0}^{\infty} \int_{0}^{\infty} D^{\phi}_sF_{t_{i}}D^{\phi}_tF_{t_{i}}dtds + \lVert f \rVert^2_H\bigg]
\end{align*}

\begin{definition}[$\mathcal{L}_{\psi}(0,\infty)$ ]
Let $\mathcal{L}_{\phi}(0,\infty)$ be the family of stochastic prosesses, $f$, on $[0,\infty)$ with the following properties:
\begin{enumerate}
 \item $E\lVert f \rVert^2_H < \infty$, 
 \item $f$ is $\phi$-differentiable,
 \item the trace of $D_s^{\phi}f_t$, $0\leq s\leq t < \infty$ exists,
 \item $E[\int_0^{\infty}\int_0^{\infty} |D_s^{\phi}f_t|^2dsdt] < \infty$
 \item and for each sequence of partitions $(\pi_n, n \in \mathbf{N})$ such that $|\pi_n| \to 0$ as $n \to \infty$ we have that
\begin{align*}
\sum_{i,j=0} E[ \int_{t_{i}}^{t_{i+1}} \int_{t_{j}}^{{t_{j+1}}} |D^{\phi}_sF^{\pi}_{t_{i}^n}D^{\phi}_tF^{\pi}_{t_{i}^n} - D^{\phi}_sF^{\pi}_{t_{i}^n}D^{\phi}_tF^{\pi}_{t_{i}^n}|dtds 
\end{align*}
and
\begin{align*}
 E[\lVert F^{\pi} - F\rVert^2_{H}]
\end{align*}
tend to $0$ as $n \to \infty$, where $\pi_n : = 0=t_0^n < t_1^n < \ldots < t_{n-1}^n < t_n^n = n$.
 
\end{enumerate}
\end{definition}

We summarize the above in a theorem.
\begin{theorem}[Properties of the integral]
Let $\{f_t\}_{t \geq 0}$ be a stochastic process such that $f \in \mathcal{L}_{\psi}(0,\infty)$.
The limit \eqref{eq:def.int} exists and satisfies;
\begin{align*}
E[ (\int_0^{\infty} f(s)dB^H(s))^2] =  E[\int_0^{\infty}\int_0^{\infty} D^{\phi}(s)f(t)D^{\phi}(t)f(s)dsdt + \lVert f \rVert_{H} ]
\end{align*}
and
\begin{align*}
E[ \int_0^{\infty} f(s)dB^H(s)] = 0. 
\end{align*}

\end{theorem}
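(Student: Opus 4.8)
The plan is to mirror the finite-horizon construction of the fractional Wick--It\^{o}--Skorohod integral in \cite{Bernt3}, using the integrability conditions built into the class $\mathcal{L}_{\phi}(0,\infty)$ to control the contributions coming from infinity.

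First I would fix a sequence of partitions $\pi_n : 0 = t_0^n < \cdots < t_n^n = n$ with $|\pi_n| \to 0$, form the step approximations $f^{\pi_n}_t := f_{t_i^n}$ for $t_i^n \le t < t_{i+1}^n$ (and $f^{\pi_n}_t := 0$ for $t \ge n$), and consider the Wick--Riemann sums $S(f,\pi_n)$. The key algebraic identity is the one displayed just before the definition of $\mathcal{L}_{\phi}(0,\infty)$: using $E[F\diamond G] = E[F]E[G]$ and the product rule for $D^{\phi}$ one obtains, for any two partitions,
\[
E[S(f,\pi_n)\,S(f,\pi_m)] = \sum_{i,j} E\Big[ \int_{t_i}^{t_{i+1}}\!\!\int_{t_j}^{t_{j+1}} D^{\phi}_s f^{\pi_n}_{t_i}\, D^{\phi}_t f^{\pi_m}_{t_j}\,dt\,ds + f^{\pi_n}_{t_i} f^{\pi_m}_{t_j} \int_{t_i}^{t_{i+1}}\!\!\int_{t_j}^{t_{j+1}} \phi(s,t)\,dt\,ds\Big].
\]
Expanding $E[\,|S(f,\pi_n) - S(f,\pi_m)|^2\,]$ with this identity reduces the Cauchy property to the vanishing of (i) $E[\,\|f^{\pi_n} - f^{\pi_m}\|_H^2\,]$ and (ii) the trace double-sum; but these are precisely the content of condition (5) in the definition of $\mathcal{L}_{\phi}(0,\infty)$ (which asserts that the discretized traces $D^{\phi}_s f^{\pi}_{t_i} D^{\phi}_t f^{\pi}_{t_j}$ converge in $L^1$ to the genuine trace $D^{\phi}_s f_t D^{\phi}_t f_s$ and that $f^{\pi} \to f$ in $\|\cdot\|_H$), with (1) and (4) used to absorb the tails past the endpoints $t_n^n = n$. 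Hence $S(f,\pi_n)$ is Cauchy in $L^2(\mathbb{P}^H)$, the limit \eqref{eq:def.int} exists, and a standard interlacing argument shows it does not depend on the chosen sequence of partitions.

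For the two identities, I would pass to the limit in the formulas already computed for $S(f,\pi_n)$. Since $L^2(\mathbb{P}^H)$-convergence forces convergence of the first moments, $E[\int_0^\infty f\,dB^H] = \lim_n E[S(f,\pi_n)] = 0$. For the isometry I would let $n \to \infty$ in the displayed expression for $E[S(f,\pi_n)^2]$: the left-hand side converges to $E[(\int_0^\infty f\,dB^H)^2]$ by continuity of the $L^2$-norm, the term $\sum_{i,j} E[f^{\pi_n}_{t_i} f^{\pi_n}_{t_j} \int\!\!\int \phi]$ is a Riemann approximation of $E[\|f\|_H^2]$ and converges by (1), and the trace double-sum converges to $E[\int_0^\infty\!\!\int_0^\infty D^{\phi}_s f_t\, D^{\phi}_t f_s\, ds\,dt]$ by (3)--(5) together with dominated convergence, the domination coming from the $L^2$ bound (4) via Cauchy--Schwarz. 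Collecting the three limits gives the asserted formula.

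The main obstacle is the interplay between the vanishing mesh and the expanding horizon: in contrast with the compact-interval case, the right endpoint $t_n^n = n$ itself tends to infinity, so the trace sums are integrals over the growing squares $[0,n]^2$, and one must show that the discretized traces approach the true trace in $L^1(\Omega\times[0,\infty)^2)$, not merely on each fixed box. I would handle this by splitting every sum into a bulk part on $[0,M]^2$ --- controlled by the finite-horizon theory of \cite{Bernt3} --- and a tail part, made small uniformly in $n$ by the $L^2$ and $L^1$ bounds (1) and (4) in the definition of $\mathcal{L}_{\phi}(0,\infty)$; this truncation argument is where most of the work lies.
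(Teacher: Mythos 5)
Your proposal follows essentially the same route as the paper: the theorem is stated there simply as a summary of the preceding construction, namely Wick--Riemann sums for step approximations, the covariance identity derived from $E[F\diamond G]=E[F]E[G]$ giving $E[S(f,\pi_n)^2]$ as a trace double-sum plus a $\phi$-weighted sum, the Cauchy property in $L^2(\mathbb{P}^H)$ guaranteed by conditions (1)--(5) defining $\mathcal{L}_{\phi}(0,\infty)$, and passage to the limit to obtain the zero-mean and isometry formulas. Your explicit truncation argument controlling the expanding endpoint $t_n^n = n$ addresses a point the paper leaves entirely implicit, so it is added precision rather than a different method.
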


\subsection{It$\bar{o}$s formula}
In this section we present It$\bar{o}$s formula for fractional Brownian motion.
Fist let us look at the $\phi-derivative$:
\begin{theorem}
 Let $(F_t, t \in [0,\infty))$ be a stochastic process in $\mathcal{L}_{\phi}([0,\infty))$ and $\underset{0 \leq s \leq \infty}{\sup} E[|D_s^{\phi}F_s|] < \infty$ and let $\eta_t = \int_0^t G_udu  + \int_0^t F_u dB_u^H$. Then for $s,t >0$
\[
 D_s^{\phi} \eta_t = \int_0^t D_s^{\phi}G_u du  + \int_0^t D_s^{\phi}F_u dB_u^H + \int_0^t F_u\phi(s,u)du, 
\]
a.s.
\end{theorem}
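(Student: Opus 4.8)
The plan is to prove the identity first for simple integrands and then pass to the limit, using that the Malliavin $\phi$-derivative is a closed operator on the fractional Hida spaces (see \cite{Bernt3}).

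For the drift term the argument is routine: since $D_s^\phi$ is closed and $\int_0^t G_u\,du$ is a Bochner integral, a Fubini-type interchange — justified by the integrability built into $\mathcal{L}_\phi([0,\infty))$ and the running hypothesis on $F$ — gives $D_s^\phi\int_0^t G_u\,du = \int_0^t D_s^\phi G_u\,du$ almost surely.

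The fWIS term is the heart of the matter. Fix a partition $\pi_n$ and set $F^{\pi_n}_u = \sum_i F_{t_i}\mathbf{1}_{[t_i,t_{i+1})}(u)$, so that $S(F,\pi_n) = \sum_i F_{t_i}\diamond(B^H_{t_{i+1}} - B^H_{t_i})$. Using the Leibniz rule $D_s^\phi(F\diamond G) = (D_s^\phi F)\diamond G + F\diamond(D_s^\phi G)$ for the $\phi$-derivative on Wick products, together with $D_s^\phi(B^H_{t_{i+1}} - B^H_{t_i}) = \int_{t_i}^{t_{i+1}}\phi(s,r)\,dr$ — which is deterministic, so that its Wick product with $F_{t_i}$ reduces to the ordinary product — one obtains
\[
D_s^\phi S(F,\pi_n) = \sum_i (D_s^\phi F_{t_i})\diamond(B^H_{t_{i+1}} - B^H_{t_i}) + \sum_i F_{t_i}\int_{t_i}^{t_{i+1}}\phi(s,r)\,dr .
\]
The first sum equals $S(D^\phi F,\pi_n)$ and, by the construction of the integral in Section 3 applied to $D^\phi F\in\mathcal{L}_\phi([0,\infty))$, converges in $L^2(\mathbb{P}^H)$ to $\int_0^t D_s^\phi F_u\,dB_u^H$; the second is a Riemann sum that should converge to $\int_0^t F_u\phi(s,u)\,du$. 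Since $S(G,\pi_n) + S(F,\pi_n)\to\eta_t$ in $L^2(\mathbb{P}^H)$ while $D_s^\phi$ of this sequence converges to the claimed right-hand side (in $L^2$ with respect to $\mathbb{P}^H\otimes ds$, say), closedness of $D^\phi$ then yields $\eta_t\in\mathrm{Dom}(D^\phi)$ together with the stated formula.

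I expect the main obstacle to be this last convergence, specifically showing that the Riemann sums $\sum_i F_{t_i}\int_{t_i}^{t_{i+1}}\phi(s,r)\,dr$ converge to $\int_0^t F_u\phi(s,u)\,du$ despite the integrable singularity of $\phi(s,u)=H(2H-1)|s-u|^{2H-2}$ along the diagonal $u=s$ — this is exactly where the hypothesis $\sup_{0\le s\le\infty}E[|D_s^\phi F_s|]<\infty$ and the trace conditions in the definition of $\mathcal{L}_\phi([0,\infty))$ enter — and in checking that the infinite-horizon truncation at the partition endpoint $t_n^n=n$ does not spoil the uniform estimates needed to pass to the limit.
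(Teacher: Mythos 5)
The paper never proves this theorem: it is stated and immediately deferred to \cite{Bernt3} (``For a proof see \cite{Bernt3}''), so there is no in-paper argument to compare with. Your sketch is the standard construction-based proof of this identity, and essentially the one in the cited literature: establish it for simple integrands using that the Malliavin derivative acts as a derivation with respect to the Wick product, note that $D_s^{\phi}(B^H_{t_{i+1}}-B^H_{t_i})=\int_{t_i}^{t_{i+1}}\phi(s,r)\,dr$ is deterministic so its Wick product with $F_{t_i}$ collapses to an ordinary product, and then pass to the limit by closedness of $D^{\phi}$. The algebraic core is correct, the treatment of the drift term is the expected one, and you rightly single out the diagonal singularity of $\phi$ and the infinite-horizon truncation as the points where the hypotheses ($F\in\mathcal{L}_{\phi}([0,\infty))$, $\sup_{s}E[|D_s^{\phi}F_s|]<\infty$) must be used quantitatively.

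The one step you present as settled but which does not follow from the stated hypotheses is the convergence of $\sum_i (D_s^{\phi}F_{t_i})\diamond(B^H_{t_{i+1}}-B^H_{t_i})$ to $\int_0^t D_s^{\phi}F_u\,dB^H_u$: you justify it by ``the construction of the integral applied to $D^{\phi}F\in\mathcal{L}_{\phi}([0,\infty))$'', but membership of $u\mapsto D_s^{\phi}F_u$ in $\mathcal{L}_{\phi}$ is not implied by $F\in\mathcal{L}_{\phi}$ together with the sup-condition; it requires control of second-order quantities of the type $D_r^{\phi}D_s^{\phi}F_u$, uniformly enough in $s$ to get convergence in $L^2(\mathbb{P}^H\otimes ds)$ for the closedness argument. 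This extra regularity is implicitly assumed whenever the right-hand side is asserted to exist (the theorem as stated is silent on it, as it is on any hypothesis on $G$), so your argument is a faithful sketch of the standard proof modulo these implicit assumptions rather than a complete derivation from the hypotheses as literally written; it would be enough to state them explicitly as standing assumptions, which is what the source the paper cites effectively does.
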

For a proof see \cite{Bernt3}. Now let we arrive at It$\bar{o}$s formula:
\begin{theorem}[It$\bar{o}$ formula]
 Let $\eta_t = \eta_0 + \int_0^t G_udu + \int_0^t F_u dB_u^H$, where $(F_t, t \in [0,\infty))$ is a stochastic process in $\mathcal{L}_{\phi}([0,\infty))$. Assume there is an $\alpha > 1-H$ such that
\[
 E[|F_u - F_v|^2] \leq C|u-v|^{2\alpha},
\]
where $|u-v| \leq \delta$ for some $\delta >0$ and
\[
 \underset{0\leq u,v \leq t, |u-v| \to 0}{\lim} E[|D_u^{\phi}(F_u - F_v|^2] = 0.
\]
Let $f:\mathbf{R}_{+} \times \mathbf{R} \to \mathbf{R}$ be a function with continuous derivative in the first variable and twice differentiable with continuous first and second derivatives in the second variable. Assume all derivatives are bounded. Moreover, assume $E[\int_0^{\infty} |F_s D_s^{\phi} \eta_s|ds] < \infty$ and $(f^{'}(s,\eta_s)F_s)$ is in $\mathcal{L}_{\phi}([0,\infty)$. Then
\begin{align*}
f(t,\eta_t) &= f(0,0) + \int_0^t \frac{\partial f}{\partial s}(s,\eta_s)ds + \int_0^t \frac{\partial f}{\partial x}(s,\eta_s)G_sds + \int_0^t \frac{\partial f}{\partial x}(s,\eta_s)F_s dB^H_s\\
&+ \int_0^t \frac{\partial^2 f}{\partial x^2}(s,\eta_s)F_s D_s^{\phi}\eta_s ds
\end{align*}
a.s.
\end{theorem}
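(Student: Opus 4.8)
The plan is to reduce the infinite-horizon Itô formula to the finite-horizon version, which is the content of the cited result from \cite{Bernt3}, and then pass to the limit. First I would fix an arbitrary $t \in (0,\infty)$ and restrict attention to the interval $[0,t]$. On that compact interval the process $\eta$ is a fractional Itô process in the sense of \cite{Bernt3}: the hypotheses I am given (namely $F \in \mathcal{L}_\phi([0,\infty))$, the H\"older estimate $E[|F_u-F_v|^2]\le C|u-v|^{2\alpha}$ with $\alpha>1-H$ for $|u-v|\le\delta$, the vanishing of $E[|D_u^\phi(F_u-F_v)|^2]$ as $|u-v|\to0$, the boundedness of all derivatives of $f$, and the integrability $E[\int_0^\infty |F_s D_s^\phi\eta_s|\,ds]<\infty$ together with $(f'(s,\eta_s)F_s)\in\mathcal{L}_\phi([0,\infty))$) all restrict to the corresponding finite-horizon hypotheses on $[0,t]$. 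Hence the finite-horizon fractional Itô formula applies verbatim on $[0,t]$ and yields exactly the displayed identity, for each fixed $t$.

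The only genuinely new point is to make sense of the objects appearing in the statement as honest $L^2(\mathbb{P}^H)$ random variables on the whole half-line, i.e. to check that each term on the right-hand side is well defined as $t\to\infty$ is not actually needed — the formula is asserted for each finite $t$ — but one does need to know that the stochastic integral $\int_0^t \frac{\partial f}{\partial x}(s,\eta_s)F_s\,dB_s^H$ is the infinite-horizon fWIS integral of Section 3 restricted to $[0,t]$, rather than a separate finite-horizon construction. For this I would invoke the hypothesis $(f'(s,\eta_s)F_s)\in\mathcal{L}_\phi([0,\infty))$ and the definition of $\mathcal{L}_\phi(0,\infty)$: by condition (1) and (4) of that definition together with the Itô isometry established in the ``Properties of the integral'' theorem, $\mathbf{1}_{[0,t]}(s)\frac{\partial f}{\partial x}(s,\eta_s)F_s$ lies in $\mathcal{L}_\phi(0,\infty)$ as well, and the two constructions (truncate-then-integrate versus integrate-on-$[0,t]$) agree because both are $L^2$-limits of the same Riemann–Stieltjes–Wick sums along a common sequence of refining partitions. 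The chain-rule term $\frac{\partial^2 f}{\partial x^2}(s,\eta_s)F_sD_s^\phi\eta_s$ is absolutely integrable in $(s,\omega)$ on $[0,t]$ by the assumed bound on $f''$ and the hypothesis $E[\int_0^\infty|F_sD_s^\phi\eta_s|\,ds]<\infty$, so it is a well-defined term; similarly the two Lebesgue integrals in $s$ are controlled by boundedness of $\partial f/\partial s$, $\partial f/\partial x$ and integrability of $G$.

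Concretely the steps are: (i) state the finite-horizon fractional Itô formula from \cite{Bernt3} on $[0,t]$ and verify each of its hypotheses follows by restriction from the hypotheses of the present theorem; (ii) identify $D_s^\phi\eta_t$ via the preceding theorem on the $\phi$-derivative of $\eta$, which is legitimate since the hypotheses $\sup_{0\le s<\infty}E[|D_s^\phi F_s|]<\infty$ (implicit in $\mathcal{L}_\phi$ plus the H\"older condition) and $F\in\mathcal{L}_\phi$ are in force; (iii) identify the finite-horizon stochastic integral appearing in the finite-horizon formula with the restriction to $[0,t]$ of the infinite-horizon fWIS integral of Section 3, using the approximation of both by common partition sums; (iv) conclude that the displayed identity holds a.s. for each $t\in[0,\infty)$.

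I expect the main obstacle to be step (iii): matching the two integral constructions requires checking that the class $\mathcal{L}_\phi(0,\infty)$ is stable under multiplication by $\mathbf{1}_{[0,t]}$ and that the partition-sum approximations used to \emph{define} the infinite-horizon integral can be chosen so that their restrictions to $[0,t]$ are exactly the approximations used in the finite-horizon theory — in other words, that no mass is lost or created at the truncation point $t$. The delicate quantitative input here is the convergence of the mixed second-moment expression $\sum_{i,j}E[\int\!\int |D_s^\phi F_{t_i}^\pi D_t^\phi F_{t_j}^\pi - \cdots|\,dt\,ds] \to 0$ from condition (5) of the definition of $\mathcal{L}_\phi(0,\infty)$, which is what guarantees the Riemann–Wick sums are Cauchy and hence that the truncated integral is the honest $L^2$ restriction; once that bookkeeping is in place, everything else is a routine transcription of the finite-horizon argument.
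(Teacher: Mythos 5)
Your reduction to the finite-horizon fractional It\^o formula is exactly what the paper has in mind: it gives no argument of its own, simply stating ``For a proof see \cite{Bernt3}'', and since the asserted identity only involves the interval $[0,t]$ for each fixed $t$, restricting the half-line hypotheses to $[0,t]$ and invoking the cited finite-horizon theorem (plus your bookkeeping identifying the truncated infinite-horizon fWIS integral with the finite-horizon one) is the implicit proof. So your proposal is correct and follows essentially the same route, just spelled out in more detail than the paper bothers to.
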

For a proof see \cite{Bernt3}.

\subsection{Fractional Clark-Hausmann-Ocone theorem}
In this section we briefly recall a Clark-Hausman-Cone theorem for fractional Brownian motion on a distribution space  $L^2(\mathbb{P}^H) \subset \mathcal{G}^{*} \subset (\mathcal{S})^*$, which we want to employ in Section 4. Let us first give a construction of $\mathcal{G}^*$ to which the operator $D_t$ will be extended.
\begin{enumerate}
 \item[] 
\begin{definition}[\cite{PT}, \cite{AOPU},\cite{Bernt3}]
Let $k \in \mathbb{N}_0$. We say that a random variable with chaos expansion
\begin{align*}
 \psi = \sum_{n=0}^{\infty} \int_{\mathbb{R}^n_{+}} f_n d(B^H)^{\otimes n}(t),
\end{align*}
where $f_n \in \hat{L}^2_H(\mathbb{R}^n_{+})$ belongs to the space $\mathcal{G}_k = \mathcal{G}_k(\mathbb{P}^H)$ if
\begin{align*}
 \lVert \psi \rVert^2_{\mathcal{G}_k} : = \sum_{n=0}^{\infty} n! \lVert f_n \rVert_{L^2_{\phi}(\mathbb{R}^n_{+})} e^{2kn} < \infty,
\end{align*}
where $\lVert \cdot \rVert_{L^2_{\phi}(\mathbb{R}^n_{+})}$ is the completion of $\mathcal{S}((\mathbb{R}^n_{+})$ as in \eqref{eq:L2}.
Now, letting
\begin{align*}
\mathcal{G} = \mathcal{G}(\mathbb{P}^H) = \underset{k \geq 0}{\cap} \mathcal{G}_k(\mathbb{P}^H),
\end{align*}
and equip $\mathcal{G}$ with the projective topology.

\item
Let $q \in \mathbb{N}_0$. We say that a function
\begin{align*}
 G = \sum_{n=0}^{\infty} \int_{\mathbb{R}^n_{+}} g_n d(B^H)^{\otimes n}(t),
\end{align*}
where $g_n \in \hat{L}^2_H(\mathbb{R}^n_{+})$ belongs to the space $\mathcal{G}_{-q} = \mathcal{G}_{-q}(\mathbb{P}^H)$ if
\begin{align*}
 \lVert G \rVert^2_{\mathcal{G}_{-q}} : = \sum_{n=0}^{\infty} n! \lVert g_n \rVert_{L^2_{\phi}(\mathbb{R}^n_{+})} e^{-2qn} < \infty,
\end{align*}
where $\lVert f_n \rVert_{L^2_{\phi}(\mathbb{R}^n_{+})}$ is the completion of $\mathcal{S}((\mathbb{R}^n_{+})$ as in \eqref{eq:L2}.
Now, letting
\begin{align*}
\mathcal{G}^{*} = \mathcal{G}^{*}(\mathbb{P}^H) = \underset{q \geq 0}{\cup} \mathcal{G}_{-q}(\mathbb{P}^H),
\end{align*}
and equip $\mathcal{G}$ with the inductive topology. Then $\mathcal{G}^{*}$ is the dual of $\mathcal{G}$, and the action of $G \in \mathcal{G}^{*}$ on $\psi \in \mathcal{G}$ is given by
\begin{align*}
 \ll G,\psi \gg = \sum_{n=0}^{\infty} n! ( g_n,f_n)_{L^2_{\phi}(\mathbb{R}^n_{+})}.
\end{align*}

\end{definition}

\end{enumerate}

We will also need a variant of the conditional expectation on $\mathcal{G}^*$, that is easier to work with.

\begin{definition}
\begin{enumerate}
 \item 
 Let 
\[
 G = \sum_{n=0}^{\infty} \int_{\mathbf{R}^n_{+}} g_n(s) d(B^H)^{\otimes n}(s) \in \mathcal{G}^{*}. 
\]
Then we define the fractional conditional expectation of $G$ with respect to $\mathcal{F}_t^H$ by
\[
\tilde{E} [F |\mathcal{F}_t^H ] = \int_{\mathbf{R}^n_{+}} g_n(s)\mathbf{1}_{\{0\leq s \leq t\}} d(B^H)^{\otimes n}(s)
\]
for $t \geq 0$. 
   \item
We say that $G \in \mathcal{G}^*$ is $\mathcal{F}_t^H$-measurable if 
\[
\tilde{E}[G|\mathcal{F}_t^H] = G, 
\]
for $t \geq 0$.
\end{enumerate}

\end{definition}

\begin{remark} 
The quasi-conditional expectation $E [ \cdot |\mathcal{F}_t^H ]$ was introduced in \cite{HO} to construct hedging strategies of financial claims.
\end{remark} 
An immediate consequence of the definition is the following, which is important in solving backward stochastic differential equations.
\begin{lemma} 
If $(f_u , 0 \leq u \leq T )$ is a real valued stochastic process such that
\begin{align*}
E[ \int_{0}^{T}\int_{0}^{T} \phi(s-t)|f_{t}||f_{s}|dtds + \int_{0}^{T} \int_{0}^{T}D^{\phi}_sf_tdsdt],
\end{align*}
and $\int_t^T f_u dB^H_t \in L^2(\mathbf{P}^H)$ then
\begin{align*}
 \tilde{E}[\int_0^T f_u dB^H_u | \mathcal{F}_t] =0.
\end{align*}
\end{lemma}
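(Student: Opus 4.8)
The plan is to expand $\int_0^T f_u\,dB^H_u$ in terms of the chaos/Wick representation and then apply the definition of $\tilde E[\,\cdot\mid\mathcal F_t]$ directly. First I would recall that the fWIS-integral of a process $f$ satisfying the stated integrability can be written, via its Wiener--It\^o chaos expansion, as a sum of iterated integrals $\sum_{n\ge 0}\int_{\mathbf R_+^n} g_n(s)\,d(B^H)^{\otimes n}(s)$, where the kernels $g_n$ are built from the chaos kernels of $f$ by the usual symmetrization-and-shift that corresponds to one extra integration in $dB^H$. The key structural point is that, because the integration in $\int_0^T f_u\,dB^H_u$ runs over $u\in[0,T]$ and the integrand at each level only contributes time-arguments already used by $f$ together with the new argument $u\le T$, every kernel $g_n$ is supported in the cube $[0,T]^n$; moreover each $g_n$ has at least one argument (the ``outer'' variable coming from the $dB^H_u$) that ranges over all of $[0,T]$ and is not constrained to lie in $[0,t]$.

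Next I would apply the definition of the fractional conditional expectation: $\tilde E[\,\cdot\mid\mathcal F_t^H]$ acts on each chaos kernel $g_n(s_1,\dots,s_n)$ by multiplication with $\mathbf 1_{\{0\le s_i\le t \text{ for all } i\}}$. Since $g_n$ is (a symmetrization of) a kernel in which one variable is the free outer integration variable over $[0,T]$, truncating all variables to $[0,t]$ amounts to replacing the outer integration $\int_0^T(\cdots)\,dB^H_u$ by $\int_0^t(\cdots)\,dB^H_u$ acting on the $\mathcal F_t$-truncation of $f$; but the truncation of $f$ to times $[0,t]$, further restricted by requiring the outer variable $u>t$ as well once symmetrized, forces the relevant kernels to vanish. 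The cleanest way to make this rigorous is to split $f\mathbf 1_{[0,T]} = f\mathbf 1_{[0,t]} + f\mathbf 1_{(t,T]}$, use linearity of the integral, and observe that for the first piece the outer variable lies in $[0,t]$ while for the second it lies in $(t,T]$; applying $\tilde E[\,\cdot\mid\mathcal F_t]$ kills the second piece entirely (its kernels already have a variable outside $[0,t]$), and for the first piece one checks that the symmetrized kernels always retain a variable in $(t,T]$ after the outer integration is reinserted, so they too are annihilated. I would carry this out at the level of the kernels, using the isometry/orthogonality of distinct chaos components so that it suffices to verify the vanishing level by level.

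The main obstacle I anticipate is bookkeeping the precise relationship between the chaos kernels of $f$ and those of $\int_0^T f_u\,dB^H_u$ in the fractional ($\phi$-weighted) setting, where the ``extra integration'' is not the plain Skorohod-type shift of the Brownian case but is twisted by the kernel $\phi$ and by the $\phi$-derivative terms appearing in the second-moment formula of Theorem~3.? (the properties-of-the-integral theorem). In particular one must be careful that the trace term $D_s^\phi f_t$ does not reintroduce time-dependence on $[0,t]$ that survives the truncation. I would handle this by first proving the statement for simple (elementary) processes $f=\sum_i F_{t_i}\mathbf 1_{[t_i,t_{i+1})}$, for which $\int_0^T f_u\,dB^H_u=\sum_i F_{t_i}\diamond(B^H_{t_{i+1}}-B^H_{t_i})$ and the fractional conditional expectation is computed termwise using $\tilde E[F_{t_i}\diamond(B^H_{t_{i+1}}-B^H_{t_i})\mid\mathcal F_t]=\tilde E[F_{t_i}\mid\mathcal F_t]\diamond(\,\tilde E[B^H_{t_{i+1}}\mid\mathcal F_t]-\tilde E[B^H_{t_i}\mid\mathcal F_t])$; for a partition point $t_i\ge t$ this difference vanishes, and for the term straddling $t$ one uses the truncation on the Wick factor directly. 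Then I would pass to the limit using the $\mathcal L_\phi(0,\infty)$-convergence $S(f,\pi_n)\to\int_0^T f\,dB^H$ in $L^2(\mathbf P^H)$ together with continuity of $\tilde E[\,\cdot\mid\mathcal F_t]$ on $\mathcal G^*$ (equivalently, its contraction property on the relevant weighted norm), concluding that $\tilde E[\int_0^T f_u\,dB^H_u\mid\mathcal F_t]=0$.
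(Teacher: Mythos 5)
Your overall route --- expand in Wiener--It\^o chaos, use the fact that the fWIS integral raises the chaos order by one with symmetrized kernels, and then apply the definition of $\tilde E[\,\cdot\mid\mathcal F_t^H]$ as truncation of every kernel argument to $[0,t]$ --- is exactly the paper's proof. But the step where you handle the lower piece fails: you claim that after reinserting the outer integration the symmetrized kernels coming from $f\mathbf 1_{[0,t]}$ ``always retain a variable in $(t,T]$'' and are therefore annihilated. They are not. Take $f\equiv 1$: then $\int_0^T f_u\,dB^H_u=B^H_T=I_1(\mathbf 1_{[0,T]})$, and $\tilde E[B^H_T\mid\mathcal F_t]=I_1(\mathbf 1_{[0,t]})=B^H_t\neq 0$. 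The same defect reappears in your simple-process argument: the terms with $t_{i+1}\le t$ give $\tilde E[F_{t_i}\mid\mathcal F_t]\diamond(B^H_{t_{i+1}}-B^H_{t_i})$, which has no reason to vanish, and you silently pass over them. So the conclusion in the form you argue it (integral over $[0,T]$) is simply false; no bookkeeping of the $\phi$-twisting can repair it.

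What is true, and what the lemma evidently intends (the hypothesis is $\int_t^T f_u\,dB^H_u\in L^2(\mathbf P^H)$, and the lemma is used later precisely to annihilate $\int_t^\infty\beta(s)q(s)\,d\hat B^H(s)$ in the linear BSDE), is $\tilde E[\int_t^T f_u\,dB^H_u\mid\mathcal F_t]=0$; the printed $\int_0^T$ is a typo. For that statement your mechanism is the right one and coincides with the paper's terse argument: each kernel of $\int_t^T f_u\,dB^H_u$ is the symmetrization of $g_n(s_1,\dots,s_n,u)\mathbf 1_{(t,T]}(u)$, so every term of the symmetrized kernel carries an indicator of $(t,T]$ in one of its arguments and is killed when all arguments are multiplied by $\mathbf 1_{[0,t]}$, which is exactly what the quasi-conditional expectation does. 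Your fallback via simple processes, Wick-multiplicativity of $\tilde E$, and passage to the $L^2$ limit also goes through once the integral starts at $t$ (only increments with $t_i\ge t$ occur, each giving $\tilde E[B^H_{t_{i+1}}\mid\mathcal F_t]-\tilde E[B^H_{t_i}\mid\mathcal F_t]=B^H_t-B^H_t=0$); the paper skips this approximation and argues directly on the chaos expansion. So: restrict the stochastic integral to $[t,T]$ and delete the false claim about the $[0,t]$ piece; with that correction your proof is essentially the paper's.
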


\begin{remark}
 Note that this also holds for $T = \infty$.
\end{remark}

\begin{proof}
Let $f_u = \sum_{n=0}^{\infty} I_n(g_n(u)) $ In be the chaos expansion of $f$ , where $g$ is a
function of n-variables. 
Then it is well known  that
\begin{align}
\int_0^T f_u dB^H_u =  \sum_{n=0}^{\infty} I_{n+1}(\tilde{g}_n(u)),
\end{align}
where $\tilde{g}_{n+1}$ is the symmetrization.
Now the result follows from the definition of the quasi-conditional expectation.
\end{proof}

\begin{lemma}[Clark-Hausmann-Ocone representation]
 Let $F \in L^2(\mathbf{P}^H)$ then
\begin{align*}
 F = E[F] + \int_0^{\infty} \tilde{E}[D_tF|\mathcal{F}_t^H]dB_t^H
\end{align*}
\end{lemma}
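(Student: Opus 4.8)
The plan is to prove the identity by expanding $F$ in its Wiener chaos and verifying the formula term by term. Write $F=\sum_{n=0}^{\infty} I_n(f_n)$ with $f_n\in\hat L^2_H(\mathbb{R}^n_+)$ symmetric, so that $E[F]=I_0(f_0)=f_0$ and it remains to show
\[
\int_0^{\infty}\tilde E[D_tF\,|\,\mathcal F_t^H]\,dB_t^H=\sum_{n\ge 1} I_n(f_n).
\]
By linearity, and by continuity of the extended operator $D_t$, of the fractional conditional expectation $\tilde E[\cdot\,|\,\mathcal F_t^H]$, and of the fWIS integral of Section~3 on the spaces $\mathcal G$, $\mathcal G^*$ and $L^2(\mathbb{P}^H)$, it is enough to treat a single chaos $F=I_n(f_n)$ and then sum over $n$.

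For a single chaos I would combine three ingredients. First, the (extended) Malliavin derivative acts on iterated integrals by $D_tI_n(f_n)=n\,I_{n-1}(f_n(\cdot,t))$, which in general lies in $\mathcal G^*$ rather than $L^2(\mathbb{P}^H)$. Second, by the very definition of the fractional conditional expectation, $\tilde E[I_{n-1}(f_n(\cdot,t))\,|\,\mathcal F_t^H]=I_{n-1}\bigl(f_n(\cdot,t)\mathbf{1}_{[0,t]^{\otimes(n-1)}}\bigr)$, i.e.\ it truncates the remaining $n-1$ arguments to $[0,t]$. Third, the integration rule already used in the proof of the previous lemma, $\int_0^{\infty}h_u\,dB^H_u=\sum_m I_{m+1}(\widetilde h_m)$, applied to $h_u=n\,I_{n-1}\bigl(f_n(\cdot,u)\mathbf{1}_{[0,u]^{\otimes(n-1)}}\bigr)$, yields $n\,I_n$ of the symmetrization of $(s_1,\dots,s_n)\mapsto f_n(s_1,\dots,s_n)\mathbf{1}_{\{s_1,\dots,s_{n-1}\le s_n\}}$. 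Since $f_n$ is already symmetric and exactly $(n-1)!$ of the $n!$ permutations place the last coordinate as the maximum, that symmetrization equals $\tfrac1n f_n$; hence the factor $n$ cancels and we recover $I_n(f_n)$. Summing over $n\ge 1$ gives $F-E[F]$, which is the claim.

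The main obstacle is analytic rather than combinatorial. For $F\in L^2(\mathbb{P}^H)$ the process $t\mapsto \tilde E[D_tF\,|\,\mathcal F_t^H]$ is only $\mathcal G^*$-valued, so one must check that its fWIS integral is well defined (it is, after the natural extension of the construction of Section~3 to $\mathcal G^*$-valued integrands, the integrand being $\mathcal F_t^H$-measurable in the sense defined above) and that the term-by-term manipulation above is legitimate, i.e.\ that $D_t$, $\tilde E[\cdot\,|\,\mathcal F_t^H]$ and the fWIS integral all commute with the $L^2(\mathbb{P}^H)$-convergent chaos sum. This is handled by the usual density and closedness argument: establish the identity first for finite chaos sums, where every object stays in $L^2(\mathbb{P}^H)$ and the computation above is classical; then use the isometry of Section~3 together with the support truncation to bound $\bigl\lVert \int_0^{\infty}\tilde E[D_tF\,|\,\mathcal F_t^H]\,dB_t^H\bigr\rVert$ by $\lVert F-E[F]\rVert$; and finally pass to the limit. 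As in \cite{AOPU}, \cite{Bernt3}, the estimates do not see the length of the time horizon, so the representation holds verbatim with the integral taken over $[0,\infty)$, in accordance with the Remark following the previous lemma.
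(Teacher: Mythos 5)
Your proposal is correct and follows essentially the same route as the paper's proof: expand $F$ in its Wiener--It\^o chaos, identify the integral over $[0,t]^{n-1}$ of each kernel with $\tilde E[D_t I_n(f_n)\,|\,\mathcal F_t^H]$ (the factor $n$ cancelling against the symmetrization), and sum over $n$. You simply make explicit the combinatorial cancellation and the density/isometry argument for interchanging the chaos sum with $D_t$, $\tilde E[\cdot\,|\,\mathcal F_t^H]$ and the fWIS integral, which the paper leaves implicit.
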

\begin{proof}
\begin{align*}
&= E[F] + \sum_{n=1}^{\infty} \int_{[0 \leq s_1,\cdots, s_n \leq \infty ]} f(s_1,\ldots,s_n) dB_{s_1}^H \cdots dB_{s_n}^H \\
&= E[F] + \sum_{n=1}^{\infty} \int_0^{\infty} ( \int_{[0\leq s_1,\cdots,s_{n-1} \leq t]} f(s_1,\ldots,s_{n-1},t)dB_{s_1}^H \cdots dB_{s_{n-1}}^H)dB_t^H \\
&= E[F] + \sum_{n=1}^{\infty} \int_0^{\infty} \tilde{E}[D_t I_n(f_n)|\mathcal{F}_t^H] dB_t^H \\
&= E[F] + \int_0^{\infty} \tilde{E}[D_tF|\mathcal{F}_t^H]dB_t^H.
\end{align*}
\end{proof}

\newpage

\section{Fractional backward stochastic differential equations (fBSDE)}
Let $B_t^H$, $t \geq 0$, be a fractional Brownian motion with Hurst index $H > \frac{1}{2}$ on the probability space $(\Omega, \mathbf{F}, \mathbb{P}^H)$ endowed with the natural filtration $\mathcal{F}_t^H$ of $B^H$ and $\mathcal{F} = \cup_{t \geq 0} \mathcal{F}_t^H$. Let $b: [0,\infty) \times \mathbb{R} \times \mathbb{R} \to \mathbb{R}$. Consider the problem of finding a $\mathcal{F}^H$-adapted processes $(p(t),q(t))$ such that
\begin{align*}
\begin{cases}
dp(t) &= b(t,p(t),q(t))dt + q(t)dB^H(t),\\ 
\underset{t \to \infty}{\lim} p(t) &= 0.
\end{cases}
\end{align*}
This is a infinite horizon fractional backward stochastic differential equation (ihfBSDE).

\subsection{Existence of general FBSDE}\label{Chap:FBSDE}

In this section we prove a result about existence and uniqueness of the solution $(Y(t),Z(t),K(t,\zeta))$ of  infinite horizon BSDEs of the form;
\begin{align}
dY(t) &= -g(t,Y(t),Z(t),K(t,\cdot))dt + Z(t) dB^H(t); 0\leq t\leq \tau, \label{eq.existence1}\\
\underset{t \to \tau}{\lim} Y(t) &= \xi(\tau)\mathbf{1}_{[0,\infty)}(\tau), \label{eq.existence2}
\end{align}
where $\tau \leq \infty$ is a given $\mathcal{F}_t$-stopping time, possibly infinite.
Our result is the fractional vesion of \cite{HOP}, and the infinite horizon version of \cite{HP}. See also  \cite{Pardoux},  \cite{Peng},  \cite{Yin}, \cite{Li}, \cite{Tang}, \cite{BSDE}, \cite{BBP} and \cite{Rong}, for the classical Brownian motion case.

Let $\eta_t = \eta_0 + b_t + \int_0^t \sigma_s dB_s^H$, where $\eta_0$ is a given constant,$b_t$ is a deterministic difierentiable function of $t$, and $\sigma_s$ is a deterministic continuous
function such that $\sigma_t$ exists for all $t$ and $\frac{d}{dt} \Vert \sigma \rVert_t$ exists and it is strictly positive. Let $\xi = h(\eta_{\tau})$ and a continuous $h$ .
Let 
\[
 p_t(x) = \frac{1}{\sqrt{2\pi t}}e^{-\frac{x^2}{2t}}.
\]
Let the semigroup $\{P_t\}_{t \geq 0}$ be given by
\[
 P-t f(x) = \int_{\mathbf{R}} p_t(x-y) f(y)dy.
\]
If f is continous we have that 
\[
 \frac{\partial }{\partial t} P_t f(x) =  \frac{1}{2} \frac{\partial^2 }{\partial x^2} P_t f(x).
\]
Using It$\bar{o}$, we get 
\[
 f(\eta_t) = P_{\lVert \sigma \rVert_t^2}f(\eta_0) + \int_0^T \frac{\partial }{\partial t}P_{\lVert \sigma \rVert_t^2- \lVert \sigma \rVert_s^2}f(\eta_s)\sigma(s)dB^H_s.
\]
Therefore
\[
 \hat{E}[f(\eta_t)|\mathcal{F}_t] = P_{\lVert \sigma \rVert_t^2}f(\eta_0) + \int_0^t \frac{\partial }{\partial t}P_{\lVert \sigma \rVert_t^2- \lVert \sigma \rVert_s^2}f(\eta_s)\sigma(s)dB^H_s.
\]

Define
\[
\mathcal{V}_{\infty} = \{Y(\cdot) = \phi(\cdot,\eta(\cdot); \phi(t, \eta_t ) \in C^{1,2}_t \text{ for all } t \in [0, \infty)\},
\]
where $C^{1,2}_t$ is the set of functions that are continuously differentiable with respect to t
and twice continuously differentiable with respect to x.
Let $\tilde{\mathcal{V}}_{\infty}$ be the completion of $\mathcal{V}_{\infty}$ under the norm
\[
 \lVert Y \rVert_{\gamma}^2 := \int_0^{\infty} e^{\lambda t} E[|Y_t|^2] dt = \int_0^{\infty} e^{\lambda t} E[|\phi(t,\eta_t)|^2] dt. 
\]

We require the folowing:
\begin{enumerate}
  \item For each $\tau \geq T > 0$
\[
 \underset{0 \leq s \leq T}{\inf} \frac{\hat{\sigma}_s}{\sigma_s} \geq c_0,
\]
for some posetive constant $c_0$, where
\[
 \hat{\sigma}_s = \int_0^s \phi(s,r)\sigma_r dr.
\]
   \item The function $g: \Omega  \times \mathbb{R}_{+} \times \mathbb{R} \times \mathbb{R}  \to \mathbb{R} $ is such that there exist real numbers $\mu,$ $\lambda$ and $K$, such that  $K >0$ and
\begin{align}\label{eq:lambda_req}
\lambda > 2\mu + 2K^2.
\end{align}
We also assume that the function $g$ satisfies the following requirements:
\begin{enumerate}
	\item $g(\cdot,y,z)$ is progessively measurable for all $y \in \mathbb{R},z \in \mathbb{R}$, and
\begin{align}
 &|g(t,y,z) - g(t,y,z')| \leq K  |z - z'|. \notag\\
\end{align}
	\item 
\begin{align}
\langle y-y',g(t,y,z,k) - g(t,y',z,k)\rangle \leq \mu |y-y'|^2
\end{align}
for all $y,y',z$ a.s.
	\item 
\begin{align}
E \int_0^{\tau} e^{\lambda t} |g(t,0,0)|^2dt < \infty.
\end{align}
	\item  Finaly we require that
\begin{align}
y \mapsto g(t,y,z),
\end{align}
is continuous for all $t,z$ a.s.
	\end{enumerate}
  \item 
Let $\eta_t = \eta_0 + b_t + \int_0^t \sigma_s dB_s^H$ as above. Assume the final condition $\xi$  is given by  $\xi = \eta_{\tau}$,  and $\xi_t = \hat{E}[\xi |\mathcal{F}_t]$ such that $E(e^{\lambda \tau}|\xi|^2) < \infty $ and 
\begin{align}
 E \int_0^{\tau} e^{\lambda t} |g(t,\xi_t,\sigma_t)|^2dt < \infty.
\end{align}
\end{enumerate}
A solution of the BSDE \eqref{eq.existence1}-\eqref{eq.existence2}, is a dual $(Y_t,Z_t)$ of progressively measurable processes with values in $\mathbb{R} \times \mathbb{R} $ s.t. $Z_t=0$ when $t > \tau$, 
\begin{enumerate}
	\item $E[ \int_0^{\tau}e^{\lambda s} D_t^{\phi}Z_s ds] < \infty$,
	\item  $Y_t = Y_{T \wedge \tau} + \int_{t \wedge \tau}^{T \wedge \tau} g_s ds - \int_{t \wedge \tau}^{T \wedge \tau} Z_s dB^H_s  $ for all deterministic $T < \infty$ and
	\item $Y_t = \xi$ on the set $\{t \geq \tau\}$.
\end{enumerate}
\begin{remark}[Infinite Horizon]
This incorperates the case where $\tau(\omega) = \infty$ on some set $A$ with $P(A) > 0$, possibly $P(A)=1$.
\end{remark}

\begin{theorem}[Existence and uniqueness]\label{thm:infinite solution}
Under the above conditions there exists a unique solution $(Y_t,Z_t)$ of the BSDE \eqref{eq.existence1}-\eqref{eq.existence2}, which satisfies the condition;
\begin{align}
&E [   \int_{0}^{\tau} e^{\lambda s} (|Y_s|^2 + |Z_s|^2ds]\notag \\ 
&\leq c E[e^{\lambda \tau}|\xi|^2 + \int_0^{\tau}e^{\lambda s}|g(s,0,0)|^2ds]\label{eq.Req},
\end{align}
for some positive number $c$.

\end{theorem}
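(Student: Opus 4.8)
The plan is to construct $(Y,Z)$ as an $L^2$-limit of solutions to finite-horizon fractional BSDEs, with all the uniform control coming from a single weighted It\^{o} estimate in which the hypothesis $\lambda > 2\mu + 2K^2$ is used. For each deterministic $n$ I would first solve the truncated equation $dY^n_t = -g(t,Y^n_t,Z^n_t)dt + Z^n_t dB^H_t$ on $[0,n\wedge\tau]$ with terminal value $Y^n_{n\wedge\tau} = \hat{E}[\xi\,\mathbf{1}_{\{\tau\leq n\}}\mid\mathcal{F}_{n\wedge\tau}]$, extended by $Z^n_t = 0$ for $t>n\wedge\tau$; existence, uniqueness and regularity on a finite interval are the fractional BSDE theory of \cite{HP}, and in particular $Y^n_t = \phi_n(t,\eta_t)$ with $\phi_n(t,\cdot)\in C^{1,2}$, so that $Z^n_t = \frac{\partial\phi_n}{\partial x}(t,\eta_t)\sigma_t$ and, by the $\phi$-derivative rule applied to $\eta$, $D_t^{\phi}Y^n_t = \frac{\partial\phi_n}{\partial x}(t,\eta_t)\hat{\sigma}_t$. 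Assumption (1) then gives the pointwise bound $Z^n_tD_t^{\phi}Y^n_t \geq c_0|Z^n_t|^2$, which is exactly what is needed to make $\int e^{\lambda s}|Z^n_s|^2ds$ appear on the correct side of the estimate.

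Next I would derive the a priori bound. Applying It\^{o}'s formula to $s\mapsto e^{\lambda s}|Y^n_s|^2$ on $[t,T\wedge\tau]$ and taking expectations kills the $dB^H$ term and produces $2E\int e^{\lambda s}Z^n_sD_s^{\phi}Y^n_s\,ds \geq 2c_0 E\int e^{\lambda s}|Z^n_s|^2 ds$ on the left. On the right one writes $2\langle Y^n_s, g(s,Y^n_s,Z^n_s)\rangle = 2\langle Y^n_s, g(s,Y^n_s,Z^n_s)-g(s,0,Z^n_s)\rangle + 2\langle Y^n_s, g(s,0,Z^n_s)-g(s,0,0)\rangle + 2\langle Y^n_s, g(s,0,0)\rangle$ and estimates these by monotonicity (2)(b) ($\leq 2\mu|Y^n_s|^2$), by the Lipschitz bound (2)(a) and Young's inequality ($2K|Y^n_s||Z^n_s| \leq c_0|Z^n_s|^2 + K^2c_0^{-1}|Y^n_s|^2$), and by $2\langle Y^n_s,g(s,0,0)\rangle \leq \varepsilon|Y^n_s|^2 + \varepsilon^{-1}|g(s,0,0)|^2$. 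For $\varepsilon$ small the strict inequality $\lambda > 2\mu + 2K^2$ leaves strictly positive coefficients in front of $\int e^{\lambda s}|Y^n_s|^2ds$ and $\int e^{\lambda s}|Z^n_s|^2 ds$, so that, uniformly in $n$,
\[
 E[e^{\lambda t}|Y^n_t|^2] + E\!\int_{t}^{T\wedge\tau} e^{\lambda s}\big(|Y^n_s|^2+|Z^n_s|^2\big)ds \;\leq\; c\,E\Big[e^{\lambda(T\wedge\tau)}|\xi_{T\wedge\tau}|^2 + \int_0^{\tau} e^{\lambda s}|g(s,0,0)|^2 ds\Big],
\]
and letting $T\to\infty$ together with $E(e^{\lambda\tau}|\xi|^2)<\infty$, assumption (3) and dominated convergence yields \eqref{eq.Req} for each approximation.

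The sequence $(Y^n,Z^n)$ is then Cauchy: for $m>n$ the difference $(Y^m-Y^n,Z^m-Z^n)$ solves on $[0,n\wedge\tau]$ a fractional BSDE with generator $g(s,Y^m_s,Z^m_s)-g(s,Y^n_s,Z^n_s)$ and terminal value $Y^m_{n\wedge\tau}-\xi_{n\wedge\tau}$, so the same weighted It\^{o} estimate applied to the difference bounds $E\int_0^{n\wedge\tau} e^{\lambda s}(|Y^m_s-Y^n_s|^2+|Z^m_s-Z^n_s|^2)ds$ by $c\,E[e^{\lambda(n\wedge\tau)}|Y^m_{n\wedge\tau}-\xi_{n\wedge\tau}|^2]$, which tends to $0$ as $n\to\infty$ by \eqref{eq.Req}. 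Passing to the limit in $\tilde{\mathcal{V}}_{\infty}\times L^2(e^{\lambda s}ds\otimes\mathbb{P}^H)$ gives $(Y,Z)$; passing to the limit in the integral identity $Y^n_t = Y^n_{T\wedge\tau}+\int_{t\wedge\tau}^{T\wedge\tau}g_s\,ds-\int_{t\wedge\tau}^{T\wedge\tau}Z^n_s\,dB^H_s$ on each fixed finite interval (using $L^2$-continuity of the fractional integral on $\mathcal{L}_{\phi}(0,\infty)$, together with uniform bounds on the $\phi$-derivatives of $Z^n$), and using \eqref{eq.Req} to check $\lim_{t\to\tau}Y_t=\xi\,\mathbf{1}_{[0,\infty)}(\tau)$, shows $(Y,Z)$ is a solution obeying \eqref{eq.Req}. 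Uniqueness is the same computation: for two solutions, It\^{o} on $e^{\lambda s}|Y_s-Y'_s|^2$ over $[t,T\wedge\tau]$ with monotonicity and Lipschitz gives $E[e^{\lambda t}|Y_t-Y'_t|^2]\leq c\,E[e^{\lambda(T\wedge\tau)}|Y_{T\wedge\tau}-Y'_{T\wedge\tau}|^2]\to 0$ as $T\to\tau$.

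I expect the main obstacle to be the fractional-analytic bookkeeping rather than the approximation scheme: justifying that the traces $D_s^{\phi}Y^n_s$ and $D_s^{\phi}Z^n_s$ are well defined with $Z^n_sD_s^{\phi}Y^n_s\geq c_0|Z^n_s|^2$ and bounded uniformly in $n$ (so that It\^{o}'s formula applies and the fractional stochastic integrals converge — this is where assumption (1) and the H\"{o}lder and $\mathcal{L}_{\phi}$ hypotheses underlying It\^{o}'s formula are consumed), controlling the extra terms the $\phi$-derivative rule generates, and performing the passage $T\to\tau$ on the set $\{\tau=\infty\}$, where the zero-limit terminal condition and the integrability ensuring the $dB^H$-integrals have zero expectation must be established separately.
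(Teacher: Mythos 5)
Your construction is essentially the paper's: finite-horizon truncations solved by the fractional BSDE theory of \cite{HP}, the structural relation $Y_t=\phi_n(t,\eta_t)$, $Z_t=\pm\sigma_t\,\partial_x\phi_n(t,\eta_t)$, $D^{\phi}_tY_t=\pm\hat{\sigma}_t\,\partial_x\phi_n(t,\eta_t)$ combined with assumption (1) so that the cross term produced by It\^{o}'s formula dominates $c_0|Z_t|^2$ on the good side, the three-term splitting of $2\langle y,g(t,y,z)\rangle$ estimated by monotonicity, the Lipschitz bound and Young's inequality under $\lambda>2\mu+2K^2$, and finally a Cauchy argument in the weighted norm $E\int_0^{\tau}e^{\lambda s}(|Y_s|^2+|Z_s|^2)ds$. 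Up to the choice of terminal data for the truncated equations, this is the same scheme.

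One step, however, is not justified as you state it: in the Cauchy estimate you bound $E\int_0^{n\wedge\tau}e^{\lambda s}\bigl(|Y^m_s-Y^n_s|^2+|Z^m_s-Z^n_s|^2\bigr)ds$ by $c\,E\bigl[e^{\lambda(n\wedge\tau)}|Y^m_{n\wedge\tau}-\xi_{n\wedge\tau}|^2\bigr]$ and assert that this tends to zero ``by \eqref{eq.Req}''. The a priori bound \eqref{eq.Req} controls only the time-integral of $e^{\lambda s}(|Y^m_s|^2+|Z^m_s|^2)$, uniformly in $m$; it gives no pointwise-in-time control of $Y^m$ at the deterministic time $n$, and in particular does not force $E[e^{\lambda n}|Y^m_n-\xi_n|^2]\to 0$ uniformly in $m>n$. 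The missing ingredient is a second application of the same weighted It\^{o} estimate on the interval $[n\wedge\tau,m\wedge\tau]$, comparing $(Y^m,Z^m)$ with the pair $(\xi_t,\sigma_t)$, which solves the equation with zero generator and the same terminal value; this yields $E\bigl[e^{\lambda(n\wedge\tau)}|Y^m_{n\wedge\tau}-\xi_{n\wedge\tau}|^2\bigr]\le c\,E\int_{n\wedge\tau}^{\tau}e^{\lambda s}|g(s,\xi_s,\sigma_s)|^2ds$, and it is hypothesis (3), namely $E\int_0^{\tau}e^{\lambda t}|g(t,\xi_t,\sigma_t)|^2dt<\infty$, rather than \eqref{eq.Req}, that makes this tail vanish as $n\to\infty$. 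This is precisely how the paper closes the argument, and it is also why the paper extends the truncated solutions beyond time $n$ by $Y^n_t=\xi_t$, $Z^n_t=\sigma_t$ rather than by $Z^n_t=0$: that extension is what makes the comparison pair available. Once you insert this step (and keep track of the sign convention $Z_t=-\sigma_t\,\partial_x\phi(t,\eta_t)$ used in \cite{HP}), your proof coincides with the paper's.
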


\begin{proof} \textit{First, let us show uniqueness}: \newline 
Let $(Y,Z)$ and $(Y',Z')$ be two solutions satisfying \eqref{eq.Req} and let $(\bar{Y},\bar{Z}) = (Y-Y',Z-Z')$. From It$\bar{o}$'s Lemma we have that
\begin{align*}
&e^{\lambda t \wedge \tau} |\bar{Y}_{t \wedge \tau} |^2 \\
&\leq - \int_{t \wedge \tau}^{T \wedge \tau} e^{\lambda s}  \lambda |\bar{Y}_s|^2 ds
+ e^{\lambda s}|\bar{Y}_{T}|^2 
+ 2\int_{t \wedge \tau}^{T \wedge \tau} e^{\lambda s} \bar{Y}_sf(t,\bar{Y}_s,\bar{Z}_s)ds \\
&+  2 \int_{t \wedge \tau}^{T \wedge \tau} e^{\lambda s} D^{\phi}_s\bar{Y}_s\bar{Z}_sds 
-  2 \int_{t \wedge \tau}^{T \wedge \tau} e^{\lambda s} \bar{Y}_s\bar{Z}_s dB_s^H\\
&\leq  e^{\lambda s}|\bar{Y}_{T}|^2\\
&+ \int_{t \wedge \tau}^{T \wedge \tau} e^{\lambda s}\Bigg[  -2\lambda |\bar{Y}_s|^2  
+  \mu |\bar{Y}_s|^2  + K |\bar{Y}_s|\lvert \bar{Z}_s \rvert\Bigg]ds
+  2 \int_{t \wedge \tau}^{T \wedge \tau} e^{\lambda s} D^{\phi}_s\bar{Y}_s\bar{Z}_sds\\
&-  2 \int_{t \wedge \tau}^{T \wedge \tau} e^{\lambda s} \bar{Y}_s\bar{Z}_s dB_s^H\\
&\leq  e^{\lambda s}|\bar{Y}_{T}|^2\\
&+ \int_{t \wedge \tau}^{T \wedge \tau} e^{\lambda s}\Bigg[ - 2\lambda |\bar{Y}_s|^2 
+  \mu |\bar{Y}_s|^2  + K |\bar{Y}_s|\lvert \bar{Z}_s \rvert\Bigg]ds
-  2 c_0 \int_{t \wedge \tau}^{T \wedge \tau} e^{\lambda s} |\bar{Z}_s|^2ds\\
&-  2 \int_{t \wedge \tau}^{T \wedge \tau} e^{\lambda s} \bar{Y}_s\bar{Z}_s dB_s^H.\\
\end{align*}
The last inequality follows for the fact that for $Y_t = \tilde{\phi}(t,\eta_t)$ we have that $Z_t = -\sigma_t \tilde{\phi}_{x}(t,\eta_t)$ (see \cite{HP} Proposition 4.3)  so that
\begin{align*}
D^{\phi}_s Y_s &= \int_0^s \phi(s-r)D_r Y_s dr\\
&= \tilde{\phi}_{x}(s,\eta_s) \int_0^s \phi(s-r)\sigma_r dr\\
&= \hat{\sigma}_s \tilde{\phi}_{x}(s,\eta_s)\\ 
&= -\frac{\hat{\sigma}_s}{\sigma_s}Z_s.
\end{align*}
So 
\begin{align*}
 D^{\phi}_s\bar{Y}_s = -\frac{\hat{\sigma}_t}{\sigma_t}\bar{Z}_t.
\end{align*}
Then it follows that
\begin{align*}
&e^{\lambda t \wedge \tau} |\bar{Y}_{t \wedge \tau} |^2 + \int_{t \wedge \tau}^{T \wedge \tau} e^{\lambda s}2\Bigg[  \lambda |\bar{Y}_s|^2  +  c_0 \lvert \bar{Z}_s \rvert^2\Bigg]ds \\
&\leq  e^{\lambda (T \wedge \tau)}|\bar{Y}_{T}|^2 +  \int_{t \wedge \tau}^{T \wedge \tau} 2\Bigg[e^{\lambda s}  \mu |\bar{Y}_s|^2 ds +  K |\bar{Y}_s|\lvert \bar{Z}_s \rvert)\Bigg]ds\\
&-  2 \int_{t \wedge \tau}^{T \wedge \tau} e^{\lambda s} \bar{Y}_s\bar{Z}_s dB_s^H.\\
\end{align*}

Combining the above with the fact that $2ab \leq \frac{a^2}{2c_0} + 2c_0 b^2 $ and $\lambda > 2\mu +2K^2$, we deduce, that for $t < T$

\begin{align*}
&e^{\lambda t \wedge \tau} |\bar{Y}_{t \wedge \tau} |^2 \\
&\leq  e^{(T \wedge \tau)}|\bar{Y}_{T}|^2 -  2 \int_{t \wedge \tau}^{T \wedge \tau} e^{\lambda s} \bar{Y}_s\bar{Z}_s dB_s^H.
\end{align*}
Letting $T \to \infty$, so that we have, since $ E[\int e^{\lambda t} |\bar{Y}(t)|^2] < \infty$,
\begin{align*}
&e^{\lambda t \wedge \tau} |\bar{Y}_{t \wedge \tau} |^2 \\
&\leq  -  2 \int_{t \wedge \tau}^{\tau} e^{\lambda s} \bar{Y}_s\bar{Z}_s dB_s^H.\\
\end{align*}
Taking expectation the uniqueness follows.\newline\newline
\textit{Proof of existence:}\newline
For each $n \in \mathrm{N}$ we construct a solution $(Y_t^n,Z^n_t) $ of the BSDE 
\[
 Y_t^n = \xi + \int_{t \wedge \tau}^{n \wedge \tau} g(s,Y_s^n,Z_s^n)ds 
-\int_{t \wedge \tau}^{\tau}  Z^n_s dB_s  
\]
by letting $\{(Y_t^n,Z^n_t); 0 \leq t\leq n \}$  be defined as a solution of the following BSDE:
\[
 Y_t^n = \hat{E}[\xi|\mathcal{F}_n] + \int_{t }^{n } \mathbf{1}_{[0,\tau]}(s)g(s,Y_s^n,Z_s^n)ds
-\int_{t }^{n }  Z^n_s dB_s 
\]
for $0\leq t\leq n$ and $\{(Y_t^n,Z^n_t); t\geq n \}$ defined by
\[
 Y_t^n = \xi_t,
\]
and
\[
 Z^n_t = \sigma_t,
\]
for $t> n$. From Theorem 4.6 in \cite{HP} we we now that this finite horizon equation has a unique solution thanks to our requirements. Next, we find some a priori estimates for the sequence $(Y^n,Z^n,K^n)$. For any $\epsilon > 0$ and $\rho < 1$   we have for all $t \geq 0, y \in \mathbb{R}$, $z\in \mathbb{R}$  with $c= \frac{1}{\epsilon}$,
\begin{align*}
&2 \langle y,g(t,y,z)\rangle = 2 \langle y,g(t,y,z) - g(t,0,z) \rangle \\
&+ 2 \langle y,g(t,0,z) - g(t,0,0) \rangle + 2 \langle y,g(t,0,0) \rangle  \\
&\leq (2\mu + \frac{1}{\rho}K^2  + \epsilon)|y|^2
+ \rho | z_s |^2 \\
&+ c|g(t,0,0)|^2.
\end{align*}
From It$\bar{o}$'s Lemma and arguments given above, we have
\begin{align*}
e^{\lambda t \wedge \tau} |Y_{t \wedge \tau}^n |^2 &+ \int_{t \wedge \tau}^{\tau} e^{\lambda s} \left[ \bar{\lambda} |Y_s^n|^2 + \bar{\rho}|z_s |^2 \right] ds\\ 
&\leq e^{\lambda s}|\eta|^2 + c\int_{t \wedge \tau}^{\tau} e^{\lambda s} |g(s,0,0,0)|^2ds\\ 
&- 2\int_{t \wedge \tau}^{\tau} e^{\lambda s} <Y_s^n,Z_s^ndB_s>,
\end{align*}
with $\bar{\lambda} = \lambda - 2\mu - \frac{1}{\rho}K^2  - \epsilon > 0$ and $\bar{\rho} = 1-\rho >0$  . 
From this it follows that
\begin{align*}
&E \left[  \int_{s \wedge \tau}^{\tau} \Big[e^{\lambda r} (|Y_r^n|^2 + \parallel Z_r^n \parallel^2) \Big]dr\right]\\
&\leq C E\left[ e^{\lambda \tau}|\xi|^2 + \int_{s \wedge \tau}^{\tau} e^{\lambda r}|g(r,0,0)|^2dr \right].
\end{align*}
Let $m > n$ and define $\Delta Y_t := Y_t^m - Y_t^n$ and $\Delta Z_t := Z_t^m - Z_t^n$, so that for $n \leq t \leq m$,
\[
 \Delta Y_t = \int_{t \wedge \tau}^{m \wedge \tau} g(s,Y_s^m,Z_s^m) ds
-\int_{t \wedge \tau}^{m \wedge \tau} \Delta Z_s dB_s.
\]
It then follows that
\begin{align*}
&\int_{t \wedge \tau}^{m \wedge \tau} \Big\{e^{\lambda s} ( \lambda |\Delta Y_s|^2 +  | \Delta Z_s |^2) \\ 
\leq &e^{\lambda t \wedge \tau} |\Delta Y_{t \wedge \tau} |^2 + \int_{t \wedge \tau}^{m \wedge \tau} \Big\{e^{\lambda s} ( \lambda |\Delta Y_s|^2 +   | \Delta Z_s |^2) \\ 
& = \int_{t \wedge \tau}^{m \wedge \tau}  e^{\lambda s}  \Delta Y_s g(s,Y_s^m,Z_s^m,K_s^m) ds\\
&- 2 \int_{t \wedge \tau}^{m \wedge \tau}  e^{\lambda s} \Delta Y_s \Delta Z_s dB_s  \\
&2\leq e^{\lambda s}|\eta|^2 c\int_{t \wedge \tau}^{m \wedge \tau} e^{\lambda s} |g(s,0,0)|^2ds 
- 2\int_{t \wedge \tau}^{m \wedge \tau} e^{\lambda s} \Delta Y_s \Delta Z_s dB_s .
\end{align*}
From the same arguments as above
\begin{align*}
&E \Bigg[ \int_{n \wedge \tau}^{m \wedge \tau} \Big\{e^{\lambda s} (|\Delta Y_s|^2 + | \Delta Z_s |^2) \Big\}ds\Bigg]\\
&\leq 4E\left[\int_{n \wedge \tau}^{\tau} e^{\lambda s}|g(s,\xi,\sigma)|^2ds\right]. 
\end{align*}
The last term in the above equation goes to zero as $n \to \infty$. Now, for $t\leq n$
\begin{align*}
\Delta Y_t &= \Delta Y_n + \int_{t \wedge \tau}^{n \wedge \tau} \Big\{g(s,Y_s^m,Z_s^m) - g(s,Y_s^n,Z_s^n)\Big\} ds -\int_{t \wedge \tau}^{n \wedge\tau} \Delta Z_s dB_s. 
\end{align*}
Using the same argument as in the case of uniqueness, we have that
\begin{align*}
 E[e^{\lambda t \wedge \tau} |\Delta Y_{t\wedge \tau}|^2] 
\leq E[e^{\lambda t \wedge \tau} |\Delta Y_n|^2] 
\leq c E\left[ \int_{n \wedge \tau}^{\tau} e^{\lambda s}|g(s,\xi_s,\eta_s)|^2ds\right].
\end{align*}
It now follows that the sequence $(Y^n,Z^n)$ is Cauchy in the norm
\begin{align*}
\lVert (Y,Z) \rVert &:= E [   \int_{0}^{\tau} e^{\lambda s} (|Y_s|^2 + |  Z_s |^2)ds].
\end{align*}
So, we have that there is an unique solution to the BSDE \eqref{eq.existence1}-\eqref{eq.existence2}, which satisfies for all  $\lambda > 2\mu +2K^2$, the condition
\begin{align*}
&E \left[   \int_{0}^{\tau} e^{\lambda s} (|Y_s|^2 + |  Z_s |^2)ds \right]\\ 
&\leq c E\left[e^{\lambda \tau}|\xi|^2 + \int_0^{\tau}e^{\lambda s}|g(s,0,0)|^2ds\right].
\end{align*}

\end{proof}

\subsection{linear infinite horizon backward stochastic differential equations (LIHBSDE)}
For linear infinite horizon backward stochastic differential equations (LIHBSDE), we can give a constuctive proof of existence.
\begin{align}\label{eq.LIHFBSDE}
dp(t) &= [\alpha(t) + b(t)p(t) + c(t)q(t)]dt + q(t)d\hat{B}^H(t),\\ 
\underset{t \to \infty}{\lim} p(t) &= 0. \notag
\end{align}
where $b(t)$ and $c(t)$ are given continuous deterministic functions and $\alpha(t) = \alpha(t,\omega)$ is a given $\mathbf{F}^H$-adapted process such that $\int_0^{\infty}|\alpha(t,\omega)|dt < \infty$ almost surely.
By the fractional Girsanov theorem we can rewrite \eqref{eq.LIHFBSDE} as
\begin{align}\label{eq.LIHBSDE II}
 dp(t) = [\alpha(t) + b(t)p(t)]dt + q(t)dB^H(t),
\end{align}
where
\[
 \hat{B}^H(t) = B^H(t) + \int_0^t c(s) ds
\]
is a fBm under the probability measure $\hat{\mathbb{P}}^H$ on $\mathbf{F}^H$ defined by
\[
\frac{d\hat{\mathbb{P}}^H(\omega)}{\mathbb{P}^H(\omega)} = \exp^{\diamond}(-\langle \omega, \hat{c} \rangle) = \exp(-\int_0^{\infty} \hat{c}(s)dB^H(s) - \frac{1}{2} \lVert \hat{c} \rVert_H^2),
\]
where $\hat{c}$ is the the continuous function with supp $\hat{c} \subset [0,\infty)$ satisfying
\[
 \int_0^{\infty} \hat{c}(s)\phi(s,t) ds = c(t)
\]
and
\[
 \lVert \hat{c} \rVert_H^2 = \int_0^{\infty} \int_0^{\infty} \hat{c}(s)\hat{c}(t) \phi(s,t)ds dt.
\]
Let us now multiply \eqref{eq.LIHBSDE II}  by the integrating factor
\[
 \beta(t) := \exp(-\int_0^{t} b(s)ds),
\]
so that we get
\begin{align*}
 d(\beta(s)p(s) = \beta(s)\alpha(s)ds + \beta(s)q(s)d\hat{B}^H(s),
\end{align*}
or by integrating
\begin{align}\label{eq.int}
 \beta(t)p(t) = \int_t^{\infty} \beta(s)\alpha(s)ds + \int_t^{\infty}\beta(s)q(s)d\hat{B}^H(s).
\end{align}
Assume that
\begin{align*}
 \lVert \alpha \rVert_{\mathcal{L}_{\phi}}^2 : &= E_{\hat{\mathbb{P}}^H} [\int_0^{\infty}\int_0^{\infty} \alpha(s) \alpha(t) \phi(s,t) ds dt]\\
&+ E_{\hat{\mathbb{P}}^H} [\int_0^{\infty}\int_0^{\infty} \hat{D}^{\phi}(s)\alpha(t) \hat{D}^{\phi}(t)\alpha(s) ds dt] < \infty,
\end{align*}
where $\hat{D}^{\phi}$ denotes the $\phi$-derivative with respect to $\hat{B}^H$.
If we now apply the quasi-conditional expectation operator
\[
 \tilde{E}_{\hat{\mathbb{P}}^H}[\cdot | \mathcal{F}_t^H ]
\]
to \eqref{eq.int} we get
\begin{align}\label{eq.intSol}
 \beta(t)p(t) = \int_t^{\infty} \beta(s)\tilde{E}_{\hat{\mathbb{P}}^H}[\alpha(s) | \mathcal{F}_t^H ]ds 
\end{align}
From \eqref{eq.intSol} we get the solution
\begin{align}\label{eq:FBSDE}
 p(t) = \int_t^{\infty} \exp(- \int_t^s b(r)dr )\tilde{E}_{\hat{\mathbb{P}}^H}[\alpha(s) | \mathcal{F}_t^H ]ds.
\end{align}

\section{A infinite horizon maximum principle}
In this section we prove a maximum principle for systems driven by a fractional Brownian motion. For classical Brownian motion systems see e.g.\cite{Haussman}, \cite{Peng2}, \cite{Peng} and \cite{YZ} and the refrences therein for more information.
Assume we have an m-dimensional fractional Brownina motion, $B^H(t)$, with Hurst parameter $H = (H_1,H_2, \ldots,H_m)$. 
Let $X(t) = X^u(t)$ be a controlled fractional diffusion, described by the stochastic differential equation;
\begin{align}\label{eq.diffusion}
dX(t) &= b(t,X(t),u(t), \omega)dt + \sigma(t,X(t),u(t), \omega)dB^H(t) \notag\\
X(0) &= x \in \mathbb{R}^n.
\end{align}

Let
\begin{align*}
 \mathcal{E}_t \subset \mathcal{F}_t,
\end{align*}
be a given subfiltration, representing the information available to the controller at time $t; t \geq0$.
The process $u(t)$ is our control, assumed to be $\{\mathcal{E}_t\}_{t\geq0}$ adapted and with values in a set $U \subset \mathbb{R}^n$.
Let $\mathbb{A}$ be our family of $\mathcal{E}_t$-adapted controls.

Let $f:[0,\infty]\times \mathbb{R}^n \times U \times \Omega \rightarrow \mathbb{R}^n$ be adapted and assume that
\begin{align*}
E\left[ \int_0^{\infty} |f(t, X(t),u(t), \omega)|dt\right] < \infty \text{ for all } u\in \mathbb{A}.\\
\end{align*}

Then we define
\begin{align*}
J(u) = E\left[ \int_0^{\infty} f(t, X(t),u(t), \omega)dt\right]
\end{align*}
to be our performance functional.
\begin{definition}[Admissible pair]
Let $\mathcal{A}$ denote the $\mathcal{F}_t^H$-adapted processes $u: [0,\infty) \times \Omega \to U$ such that $X^u(t)$ exists and doesn't explode in $[0,\infty)$ and such that it satisfies the stochastic differential equation \eqref{eq.diffusion}. If $u \in \mathcal{A}$ and $X^u(t)$ is the corresponding state process, we call $(u,X^u(t))$ an admissible pair.
\end{definition}
We study the problem to find $\hat{u}\in \mathbb{A}$ such that
\begin{align*}
J(\hat{u}) = \sup_{u\in \mathbb{A}}J(u).
\end{align*}
If such $\hat{u} \in \mathcal{A}$ exists the $\hat{u}$ is called an optimal control and $(\hat{u},X^{\hat{u}}(t))$ is called an optimal pair.\\
Let $ C([0,\infty), \mathbb{R}^{n \times m})$ be the set of continuous functions from $[0,\infty)$ into $\mathbb{R}^{n \times m}$. We now define the Hamiltonian $H:[0,\infty) \times \mathbb{R}^n \times U \times \mathbb{R}^n \times  C([0,\infty), \mathbb{R}^{n \times m}) \rightarrow \mathbb{R}$, by
\begin{align}
H(t,x,u,p,q,\omega)  &= f(t,x,u, \omega) + b^{T}(t,x,u, \omega)p \notag \\
&+ \sum_{i=1}^{n} \sum_{j,k=1}^{m} \sigma_{i,k}(t,x,u) \int_0^{\infty} q_{i,k}(s)\phi_{H_k}(s,t)ds, \label{eq:Hamilton}
\end{align}
where $\phi_{H_k}(s,t)$ is defined as above.
For notational convenience we will in the rest of the paper suppress any $\omega$ in the notation.

The adjoint equation that arrise in the maximum principle is the unknown $\mathcal{F}_t$-predictable processes $(p(t),q(t))$ that satisfies the following stochastic differential equation;
\begin{align}\label{eq:adjoint} 
d\hat{p}(t) &= - \nabla_x H(t,\hat{X}(t),\hat{u}(t),\hat{p}(t),\hat{q}(t))dt + q(t)dB^H(t).
\end{align}
For the solution of equations of the form \eqref{eq:FBSDE} see Chapter \ref{Chap:FBSDE}.

\begin{theorem}[Partial Information, Infinite Horizon Fractional Maximum Principle]
Let $\hat{u} \in \mathcal{A}$ and let  $(\hat{p}(t),\hat{q}(t))$ be an associated solution to the  equation \eqref{eq:adjoint}. Define $I_4$ as
\[
I_4 := E[ \sum_i^{n} \sum_{j,k}^{m} \int_0^{\infty} \int_0^{\infty} D_{j,s}^{\phi_j}\{\sigma_{i,k}(t,X(t),u(t)) -  \sigma_{i,k}(t,\hat{X}(t),\hat{u}(t)) \}D_{k,t}^{\phi_k}\hat{q}_{i,j}(s)dtds].
\]
Assume that for all $u\in \mathcal{A}$ the following terminal condition holds:
\begin{align}
0 \leq E\left[ \overline{\lim_{t \to \infty}} [ \hat{p}(t)^{T}(X(t) - \hat{X}(t)) ]\right], \label{eq:lim}
\end{align}
and
\[
 I_4 \leq 0.
\]
Moreover, assume that \newline
$H(t,x,u,\hat{p}(t),\hat{q}(t))$ is concave in $x$ and $u$ and
\begin{align}
E\left[ H(t,\hat{X}(t),\hat{u}(t),\hat{p}(t),\hat{q}(t)) | \mathcal{E}_t\right] \notag \\
= \max_{u \in U} E\left[ H(t,\hat{X}(t),u,\hat{p}(t),\hat{q}(t)) | \mathcal{E}_t\right]. \label{eq:max}
\end{align}
and
\begin{align}\label{eq:derivL2}
&E\left[|\nabla_u H(t,\hat{X}(t),\hat{u}(t),\hat{p}(t),\hat{q}(t),\hat{r}(t,\cdot))|^2\right] < \infty.
\end{align}

Then we have that $\hat{u}(t)$ is optimal.
\end{theorem}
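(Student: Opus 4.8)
The plan is to follow the classical sufficient-maximum-principle argument, adapted to the fractional Wick--It\^o--Skorohod calculus and to the infinite-horizon setting. First I would fix an arbitrary admissible control $u\in\mathcal A$ with corresponding state process $X(t)=X^u(t)$, and write
\[
J(\hat u) - J(u) = E\left[\int_0^\infty \big(f(t,\hat X(t),\hat u(t)) - f(t,X(t),u(t))\big)\,dt\right].
\]
Using the definition \eqref{eq:Hamilton} of the Hamiltonian, I would replace each occurrence of $f$ by $H$ minus the drift and diffusion terms, so that
\[
f(t,\hat X,\hat u) - f(t,X,u)
= \big(H(t,\hat X,\hat u,\hat p,\hat q) - H(t,X,u,\hat p,\hat q)\big)
- (b(t,\hat X,\hat u) - b(t,X,u))^T\hat p
- \big(\text{diffusion term}\big).
\]
Call the three resulting integrals $I_1$ (the Hamiltonian difference), $I_2$ (the $b$-term), $I_3$ (the $\sigma$-term with the $\phi$-kernel). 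The goal is to show $I_1 + I_2 + I_3 \ge 0$.

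Next I would handle $I_2 + I_3$ by applying the fractional It\^o formula (Theorem in the It\^o's formula subsection) to the product $\hat p(t)^T(X(t)-\hat X(t))$, using that $d(X-\hat X)$ has drift $b(t,X,u)-b(t,\hat X,\hat u)$ and diffusion $\sigma(t,X,u)-\sigma(t,\hat X,\hat u)$, while $d\hat p = -\nabla_x H\,dt + \hat q\,dB^H$. The product rule produces: a term $-\nabla_x H(t,\hat X,\hat u,\hat p,\hat q)^T(X-\hat X)\,dt$, a term $(b(t,X,u)-b(t,\hat X,\hat u))^T\hat p\,dt$, a martingale-type stochastic integral, and a Malliavin-trace cross term of the form $\sum D^{\phi}\{\sigma - \hat\sigma\}\cdot D^{\phi}\hat q$ integrated in $dt$ — this last one is exactly $I_4$. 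Taking expectations, using that the stochastic integral has zero expectation (Theorem on properties of the integral), and using the terminal condition \eqref{eq:lim} to control $\overline{\lim}_{t\to\infty}\hat p(t)^T(X(t)-\hat X(t))$ from below, I would obtain
\[
0 \le E\left[\overline{\lim_{t\to\infty}}\,\hat p(t)^T(X(t)-\hat X(t))\right]
= E\left[\int_0^\infty\!\!\big((b(t,X,u)-b(t,\hat X,\hat u))^T\hat p - \nabla_x H^T(X-\hat X)\big)dt\right] + I_4,
\]
which, combined with $I_4\le 0$, rearranges to $I_2 + I_3 \le -E[\int_0^\infty \nabla_x H(t,\hat X,\hat u,\hat p,\hat q)^T(X-\hat X)\,dt] + (\text{the }\phi\text{-kernel }\sigma\text{-piece not absorbed})$; one has to be careful that the precise bookkeeping matches $I_3$ with the kernel term in $H$ and the It\^o cross term, but this is the routine part.

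Finally, for $I_1$ I would invoke concavity of $H(t,\cdot,\cdot,\hat p(t),\hat q(t))$ in $(x,u)$, which gives
\[
H(t,\hat X,\hat u,\hat p,\hat q) - H(t,X,u,\hat p,\hat q)
\ge \nabla_x H(t,\hat X,\hat u,\hat p,\hat q)^T(\hat X - X) + \nabla_u H(t,\hat X,\hat u,\hat p,\hat q)^T(\hat u - u).
\]
The $\nabla_x H$ term here cancels against the contribution extracted from $I_2+I_3$ above. For the $\nabla_u H$ term, I would condition on $\mathcal E_t$: since $u,\hat u$ are $\mathcal E_t$-adapted, $E[\nabla_u H^T(\hat u-u)\mid\mathcal E_t] = \nabla_u E[H\mid\mathcal E_t]^T(\hat u - u)$ (using \eqref{eq:derivL2} to justify differentiating under the conditional expectation), and the maximality condition \eqref{eq:max} says $\hat u(t)$ maximizes $u\mapsto E[H(t,\hat X,u,\hat p,\hat q)\mid\mathcal E_t]$ over $U$, so the first-order condition forces $\nabla_u E[H\mid\mathcal E_t]^T(\hat u - u)\ge 0$ for every admissible $u$ (or equals $0$ at an interior optimum; the variational inequality form handles the boundary case). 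Putting the pieces together yields $J(\hat u) - J(u)\ge 0$, hence $\hat u$ is optimal.

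The main obstacle I expect is the infinite-horizon bookkeeping in the It\^o step: justifying that the stochastic integral $\int_0^\infty e^{(\cdot)}\hat p^T(\sigma-\hat\sigma)\,dB^H$ genuinely has zero expectation (requiring the integrand to lie in $\mathcal L_\phi(0,\infty)$, which must be checked from the admissibility hypotheses and \eqref{eq:derivL2}), and that the boundary term at infinity is captured \emph{only} through the $\overline{\lim}$ in \eqref{eq:lim} rather than an honest limit — so one must work with $\overline{\lim}$ throughout and be sure the inequalities point the right way. The second delicate point is matching the Malliavin-trace cross term produced by the fractional It\^o formula precisely with the quantity $I_4$ as defined in the statement and with the $\phi$-kernel term inside $H$; getting the indices $i,j,k$ and the arguments of $D^{\phi_j}_s$, $D^{\phi_k}_t$ to line up correctly is where the fractional calculus differs from the Brownian case and requires care.
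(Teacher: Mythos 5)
Your proposal is correct and follows essentially the same route as the paper: the same decomposition of $J(u)-J(\hat u)$ into the Hamiltonian difference $I_1$, the drift term $I_2$, the $\phi$-kernel diffusion term $I_3$ and the Malliavin cross term $I_4$, with $I_1$ handled by concavity plus the conditional maximality \eqref{eq:max} and \eqref{eq:derivL2}, and the remaining terms absorbed via the fractional It\^o product rule applied to $\hat p(t)^T(X(t)-\hat X(t))$ together with the transversality condition \eqref{eq:lim}. The delicate points you flag (zero expectation of the infinite-horizon stochastic integral, the $\overline{\lim}$ bookkeeping, and identifying the It\^o cross terms with $I_3$ and $I_4$) are exactly the steps the paper's proof also relies on without further elaboration.
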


\begin{proof}
Let $I := E[ \int_0^{\infty} (f(t, X(t),u(t)) - f(t, \hat{X}(t),\hat{u}(t)))dt] = J(u) -J(\hat{u})$.
Then $I = I_{1} - I_{2} - I_{3} - I_{4}$, where
\begin{align}
I_{1} &:= E\Biggl[ \int_0^{\infty} (H(s,X(s),u(s),\hat{p}(s),\hat{q}(s)) \notag\\ &-H(t,\hat{X}(s),\hat{u}(t),\hat{p}(s),\hat{q}(s)))ds \Biggr] \label{eq:I11}, \\
I_{2} &:= E\left[ \int_0^{\infty} \hat{p}(s)^T(b(s)-\hat{b}(s))ds\right] \notag, \\
I_{3} &:= E\left[\int_0^{\infty} \int_0^{\infty} \sum_i^{n} \sum_{k}^{m} \Phi_{i,k}(s,\hat{X}(s),\hat{u}(s),X(s),u(s))\hat{q}_{i,k}(t) \phi_ {H_k}(s,t) ]dsdt\right] \notag, 
\end{align}
where $\Phi_{i,k}(s,\hat{X}(s),\hat{u}(s),X(s),u(s)):= \sigma_{i,k}(\hat{X}(s),\hat{u}(s)) - \sigma_{i,k}(X(s),u(s))$.
From concavity we get that
\begin{align}
&H(t,X(t),u(t),\hat{p}(t),\hat{q}(t)) - H(t,\hat{X}(t),\hat{u}(t),\hat{p}(t),\hat{q}(t))\\
&\leq \nabla_x H(t,\hat{X}(t),\hat{u}(t),\hat{p}(t),\hat{q}(t))^T(X(t) - \hat{X}(t)) \notag \\
&+ \nabla_u H(t,\hat{X}(t),\hat{u}(t),\hat{p}(t),\hat{q}(t))^T(u(t) - \hat{u}(t)). \label{eq:concave}
\end{align}

Then we have from \eqref{eq:max}, \eqref{eq:derivL2} and that $u(t)$ is adapted to $\mathcal{E}$,
\begin{align}
0 &\geq \nabla_u E\left[ H(t,\hat{X}(t),u,\hat{p}(t),\hat{q}(t)) | \mathcal{E}_t\right]^T_{u=\hat{u}(t)}(u(t) - \hat{u}(t)) \notag\\
&= E\left[ \nabla_u H(t,\hat{X}(t),\hat{u}(t),\hat{p}(t),\hat{q}(t))^T(u(t) - \hat{u}(t)) | \mathcal{E}_t\right]. \label{eq:equality}
\end{align}

Combining \eqref{eq:adjoint}, \eqref{eq:I11}, \eqref{eq:concave} and \eqref{eq:equality}
\begin{align*}
I_{1}^{\infty} &\leq E\left[ \int_0^{\infty} \nabla_x H(t,\hat{X}(s),\hat{u}(s),\hat{p}(s),\hat{q}(s))^T(X(s) - \hat{X}(s)) ds\right]\\
&= E\left[ \int_0^{\infty} (X(s) - \hat{X}(s))^T d\hat{p}(s) \right] =: -J_1.
\end{align*}
We see that
\begin{align*}
I_2 &= E\left[ \int_0^{\infty} \hat{p}(s)^T(b(s)-\hat{b}(s))ds\right]\\
&= E[\int_0^{\infty} \hat{p}(t)(d\hat{X}(t) - dX(t))]\\
&+ E[\int_0^{\infty} \hat{p}^T(t)\{\sigma_{i,k}(\hat{X}(s),\hat{u}(s)) - \sigma_{i,k}(X(s),u(s))\}dB^H(t)]\\
&= E[\int_0^{\infty} \hat{p}(t)(d\hat{X}(t) - dX(t))]
\end{align*}

Now, using \eqref{eq:lim} and Ito's formula
\begin{align*}
0 &\leq E\left[ \overline{\lim_{t \to \infty}} [ \hat{p}(t)^{T}(X(t) - \hat{X}(t)) ]\right] \\
&= E\left[ \int_0^{\infty} (X(s) - \hat{X}(s))^T d\hat{p}(s) \right] + E[\int_0^{\infty} \hat{p}(t)(d\hat{X}(t) - dX(t))]\\
&+  E\left[\int_0^{\infty} \int_0^{\infty} \sum_i^{n} \sum_{k}^{m} \Phi_{i,k}(s,\hat{X}(s),\hat{u}(s),X(s),u(s))\hat{q}_{i,k}(t) \phi_ {H_k}(s,t) ]dsdt\right]\\
&+ E[ \sum_i^{n} \sum_{j,k}^{m} \int_0^{\infty} \int_0^{\infty} D_{j,s}^{\phi_j}\{\sigma_{i,k}(t,X(t),u(t)) -  \sigma_{i,k}(t,\hat{X}(t),\hat{u}(t)) \}D_{k,t}^{\phi_k}\hat{q}_{i,j}(s)dtds]\\
&= I_{2}^{\infty} + J_{1}^{\infty} + I_{3}^{\infty} + I_{4}^{\infty}.
\end{align*}
Finally, combining the above we get
\begin{align*}
J(u) - J(\hat{u}) &\leq I_{1}^{\infty} - I_{2}^{\infty} - I_{3}^{\infty} - I_{4}^{\infty}\\
&\leq  -J_1^{\infty} - I_{2}^{\infty} - I_{3}^{\infty} - I_{4}^{\infty}\\
&\leq 0.
\end{align*}
This holds for all $u\in \mathcal{A}$ so the result follows.
\end{proof}

\newpage

We now give a infinite horizon version of the example in \cite{BHOS}.
\begin{example}[minimal variance problem]
Consider a  financial market driven by two independent fractional Brownian motions, $B_1(t) = B_1^{H_1}(t)$ and $B_2(t) = B_2^{H_2}(t)$ with $ H_1,H_2 \in (\frac{1}{2},1)$, defined as follows:
\begin{enumerate}
 \item[1:]
A Bond Price
\begin{equation*}
 \begin{cases}
  dS_0(t) = 0,\\
  S_0(0) = 1.
 \end{cases}
\end{equation*}

 \item[2:] Stock 1
\begin{equation*}
 \begin{cases}
  dS_1(t) = dB_1(t),\\
  S_1(0) = s_1.
 \end{cases}
\end{equation*}

 \item[3:] Stock 2 
\begin{equation*}
 \begin{cases}
  dS_2(t) = dB_1(t) + dB_2(t),\\
  S_2(0) = s_2.
 \end{cases}
\end{equation*}

\end{enumerate}
If $\phi(t) = (\phi_0(t),\phi_1(t),\phi_2(t)) \in \mathbb{R}^3$ is a portfolio ( the number of units of bond, stock 1 and stock 2, respectively, held at time $t$), then the corresponding value process is
\[
 V^{\theta}(t) = \theta(t)S(t) = \sum_{i= 0}^2 \theta_i(t)S_i(t).
\]
The protfolio is called self-financing if
\begin{align*}
dV^{\theta}(t) = \theta(t)dS(t) = \theta_1(t) dB_1(t) + \theta_2(t) (dB_1(t) + dB_2(t)).
\end{align*}
The market is called complete if any bounded $\mathcal{F}^H$-measurable random variable $F$, can be hedged, in the sense that there exist a self-financing portfolio $\theta(t)$ and an intial value $z \in \mathbb{R}$ such that
\begin{align*}
 F = z + \int_0^{\infty} \theta(t)dS(t),
\end{align*}
almost surely (see \cite{HO} for more details).
Let us consider the case where we are unable to trade n stock 1.  Let us say we want to stay ``close'' in some sense to $B_1(t)$ at all times. We let $\theta_2(t) = u(t)$ and consider ``close'' as small $L^2(\mu)$-difference. Find $z \in \mathbb{R}$ and admissible $u(t)$ such that
\begin{align*}
 J(z,u) := E [ \{ B_1(T) - (z + \int_0^{T} u(t)(dB_1(t) + dB_2(t))dt) \}^2 ] 
\end{align*}
is minimal for all $T \geq 0$. Its clear that $z = 0$ is optimal, so it remains to minimize
\begin{align*}
 J(z,u) := E [ \{  \int_0^{T} (u(t)-1)(dB_1(t) + dB_2(t))dt \}^2 ] 
\end{align*}
Let
\[
 J(u) = E\left[\int_0^{\infty} e^{-\rho t} \frac{1}{2}X^2(t) dt \right],
\]
where
\[
 dX(t) = u(t)dB_1(t) + (1-u(t))dB_2(t),
\]
and $\rho \geq 0$.
We have
\begin{align*}
 H(t,x,u,p,q) &= \frac{1}{2}e^{-\rho t}x^2\\
&+ u\left( \int_0^{\infty} q_1(s) \phi_1(s,t)ds + \int_0^{\infty} q_2(s) \phi_2(s,t)ds\right)\\
&+ \int_0^{\infty} q_1(s) \phi_1(s,t)ds.
\end{align*}
So that
\[
 \nabla_x H(t,x,u,p,q) =  x e^{-\rho t}x^2,
\]
and
\[
 \nabla_u H(t,x,u,p,q) =  \int_0^{\infty} q_1(s) \phi_1(s,t)ds + \int_0^{\infty} q_2(s) \phi_2(s,t)ds.
\]
It now follows that we have 
\begin{align*}
&dp(t) = e^{-\rho t}X(t)dt + q_1(t)dB_1(t) + q_2(t)dB_2(t).
\end{align*}
If we let $q_1(t) = \frac{1}{\rho}e^{-\rho t}u(t), q_2(t) = \frac{1}{\rho}e^{-\rho t}(1-u(t))$ then we have that
\begin{align*}
 &H(t,\hat{X}(t),v,\hat{p}(t),\hat{q}(t)) = \frac{1}{2}e^{-\rho t}\hat{X}^2(t)\\
&+ v\left( \int_0^{\infty} \frac{1}{\rho}e^{-\rho s}\hat{u}(s) \phi_1(s,t)ds + \int_0^{\infty} \frac{1}{\rho}e^{-\rho t}(1-\hat{u}(t))s) \phi_2(s,t)ds\right)\\
&+ \int_0^{\infty}\frac{1}{\rho}e^{-\rho s}\hat{u}(s) \phi_1(s,t)ds.
\end{align*}
Since we need that the maximum of this expression is attained at $v = \hat{u}$, is is a easy to see that
we must have
\[
 \int_0^{\infty} ( (1-\hat{u}(s))\phi_1(s,t) - \hat{u}(s) \phi_2(s,t))ds = 0.
\]
This is a symmetric Fredholm integral equation of the first kind and it is well known that it ha a unique solution $\hat{u}(t) \in L^2([0,\infty))$, see \cite{LF}. This $\hat{u}(t)$ satisfies all reqirements in our theorem and we also see from It$\bar{o}$'s lemma that 
\[
 p(t) = \frac{1}{\rho}e^{-\rho t} X(t),
\]
so that we have that
\[
\overline{\lim_{t \to \infty}} \hat{p}(t)(X(t) - \hat{X}(t)) = 0,
\]
almost surely.
This gives us that
\[
 \hat{u}(t),
\]
is a optimal control.
\end{example}

\newpage

\bibliographystyle{amsalpha}
\bibliography{references}

\end{document}